\documentclass[11pt,a4paper,reqno,oneside]{amsart}
\evensidemargin 0in
\oddsidemargin 0in
\textwidth 5.1in
\topmargin -0.5truein
\textheight 7.8in
\usepackage{tabls}
\usepackage{url}
\usepackage[linktocpage = true]{hyperref}
\hypersetup{
	colorlinks = true,
	citecolor = blue,
}
\usepackage{color}
\definecolor{red}{rgb}{1,0,0}
\definecolor{green}{rgb}{0,1,0}
\definecolor{blue}{rgb}{0,0,1}
\definecolor{refkey}{gray}{.625}
\definecolor{labelkey}{gray}{.625}
\usepackage[lite]{amsrefs}
\usepackage[a4paper]{geometry}
\usepackage{amsfonts}
\usepackage{amssymb}
\usepackage{mathrsfs}
\usepackage{amsmath}
\usepackage{amsthm}
\usepackage{tikz}
\usepackage{tikz-cd}
\usepackage{amscd}
\usepackage{enumerate}
\usepackage{paralist}
\usepackage{graphicx}
\usepackage{mathtools}
\usepackage{kantlipsum}
\usepackage{stmaryrd}
\usepackage{extarrows}
\allowdisplaybreaks

\setlength{\parindent}{0pt}
\parskip = 0.9em

\makeatletter
\def\title@font{\normalsize\bfseries}
\let\ltx@maketitle\@maketitle
\def\@maketitle{\bgroup%
	\let\ltx@title\@title%
	\def\@\title{\resizebox{\textwidth}{!}{%
			\mbox{\title@font\ltx@title}%
	}}%
	\ltx@maketitle%
	\egroup}
\makeatother

\geometry{hcentering,vcentering,textwidth=6.5in,textheight=237mm,headheight=12pt, footskip=10mm,headsep=20pt}

\theoremstyle{plain}
\newtheorem*{zorn*}{Zorn's lemma}
\newtheorem*{tychonoff*}{Tychonoff's theorem}

\newtheorem{definition}{Definition}[section]
\newtheorem{lemma}[definition]{Lemma}
\newtheorem{Cor}[definition]{Corollary}
\newtheorem{theorem}[definition]{Theorem}
\newtheorem*{theorem*}{Theorem}

\newtheorem{def-prop}[definition]{Definition-Proposition}
\newtheorem{proposition}[definition]{Proposition}
\newtheorem{prop-def}[definition]{Proposition-Definition}
\newtheorem{example}[definition]{Example}

\DeclareMathOperator{\CDO}{CDO}

\newcommand {\emptycomment}[1]{}


\newcommand{\CinfM}{C^\infty(M)}

\usepackage{stmaryrd}

\newcommand{\R}{\mathbb{R}}

\begin{document}
	\title{
		The geometric constraints on Filippov  algebroids
	}
	\author{Yanhui Bi}
	\address{Center for Mathematical Sciences, College of Mathematics and Information Science, Nanchang Hangkong University} 
	\email{\href{mailto:biyanhui@nchu.edu.cn}{biyanhui@nchu.edu.cn}}

	\author{Zhixiong Chen}
	\address{College of Mathematics and Information Science, Nanchang Hangkong University} 
	\email{\href{mailto:chenzhixiong0908@foxmail.com}{chenzhixiong0908@foxmail.com}}
	
	\author{Zhuo Chen}
	\address{Department of Mathematical Sciences, Tsinghua University} 
	\email{\href{mailto:chenzhuo@tsinghua.edu.cn}{chenzhuo@tsinghua.edu.cn}}
	
	
	\author{Maosong Xiang}
	\address{School of Mathematics and Statistics, Huazhong University of Science and Technology}
	\email{\href{mailto: msxiang@hust.edu.cn}{msxiang@hust.edu.cn}}
	\thanks{Supported by the National Natural Science Foundation of China (NSFC)  grants 11961049(Bi),  11901221(Xiang), and 12071241(Zhuo Chen), and by the Key Project of Jiangxi Natural Science Foundation grant  20232ACB201004(Bi, Xiang and Zhixiong Chen).}
	\date{}
	\begin{abstract}
		Filippov $n$-algebroids are introduced by Grabowski and Marmo as 	a natural generalization of Lie algebroids. On this note, we characterized Filippov $n$-algebroid structures by considering certain multi-input connections, which we called Filippov connections, on the underlying vector bundle. Through this approach, we could express the $n$-ary bracket of any Filippov $n$-algebroid using a torsion-free type formula. Additionally, we transformed the generalized Jacobi identity of the Filippov $n$-algebroid into the Bianchi-Filippov identity. Furthermore, in the case of rank $n$ vector bundles, we provided a characterization of linear Nambu-Poisson structures using Filippov connections.

	\end{abstract}
	\maketitle
	\keywords{Keywords: Filippov algebroids; anchored bundles; Filippov connections; Bianchi-Filippov identity; Nambu-Poisson structures}
	\section*{Introduction}

		Filippov introduced a generalized Jacobi identity for $n$-ary skew-symmetric operation, which acts as a replacement for the classical Jacobi identity in the context of Lie algebras \cite{ref10}. He also proposed the concept of $n$-Lie algebra, also known as Filippov $n$-algebra, with the corresponding generalized Jacobi identity referred to as the Filippov identity. Nambu and Takhtajan extended the concept of Poisson manifold to an $n$-ary generalization called Nambu-Poisson structure in order to study Hamiltonian mechanics more comprehensively \cite{ref16,Tak}. It is worth noting that both the Nambu-Poisson structure and the $n$-Lie algebra share the same generalized Jacobi identity. Grabowski and Marmo introduced the concept of Filippov $n$-algebroids, an $n$-ary generalization of Lie algebroids, in order to determine the relationship between linear Nambu-Poisson structures and Filippov algebras \cite{ref12}. Consequently, it is reasonable to anticipate that many tools used to study Lie algebroids could be enhanced or upgraded to the realm of Filippov algebroids. Therefore, we aim to address the absence of the concepts of connections and curvatures of Filippov algebroids in the literature and provide a primitive analysis of these topics from a geometric point of view.
	
	Recall that a Lie algebroid  is a (real) vector bundle $A\to M$ together with a bundle map $\rho\colon A\to TM$, called anchor, and a Lie bracket $[\cdotp,\cdotp]$ on the section space $\Gamma(A)$ of $A$, satisfying that $\rho\colon \Gamma(A)\to \Gamma(TM)$ is a morphism of Lie algebras and the Leibniz rule
	\[
	[X, fY] =f[X, Y] +(\rho(X)f)Y, \qquad \forall X, Y\in \Gamma(A) \mbox{ and } f\in \CinfM.
	\]
	By an easy smooth analysis, the bracket $[\cdotp,\cdotp]$ can always be reformulated in the form
	\begin{equation}\label{Eqt:XYcommute}
		[X,Y]=\nabla _XY-\nabla _YX,
	\end{equation}
	where $\nabla\colon \Gamma(A)\times \Gamma(A)\to \Gamma(A)$ satisfies the properties
	\[
	\nabla _{fX}Y=f \nabla_XY \mbox{ and } \nabla _{X}(fY)=f \nabla_XY + (\rho(X)f) Y.
	\]
	One calls $\nabla$ a connection on the anchored bundle $(A,\rho)$. (This is indeed a straightforward generalization of connections on vector bundles.)
	
	When the bracket $[-,-]$ of a Lie algebroid $A$ is expressed in the form \eqref{Eqt:XYcommute}, one says that the connection $\nabla$ is torsion free. See~\cite{LSX} for the existence of torsion free connections on Lie algebroids.
	The curvature form $R^{\nabla} \in \Gamma(\wedge^2 A^* \otimes \operatorname{End}(A))$ of such a connection $\nabla$ is defined in the standard manner:
	\[
	R^{\nabla }(X,Y)(Z)=\nabla _X\nabla _YZ-\nabla _Y\nabla _XZ-\nabla _{[X,Y]}Z,
	\]
	for all $X,Y,Z \in \Gamma(A)$. Now, $\rho$ being a morphism of Lie algebras is equivalent to the condition that $R^{\nabla}$ is a tensor in its third argument.  Moreover, the Jacobi identity for $[~,~]$ is transformed into the following Lie-Bianchi identity
	\begin{equation*}
		R^{\nabla}(X,Y)(Z)+R^{\nabla}(Y,Z)(X)+R^{\nabla}(Z,X)(Y)=0.
	\end{equation*}
	Therefore, Lie algebroids can be realized as anchored bundles equipped with  special connections~\cite{PP}. We wish to find an analogous characterization of the $n$-ary bracket of any Filippov algebroid.
	A significant difference between Lie algebroids and Filippov $n$-algebroids (for $n\geqslant 3$) is that the bracket and anchor of the latter are of more arguments (see Definition \ref{def1.2}). So there is not an obvious way to extend Eq \eqref{Eqt:XYcommute}.
	We come up with a solution in Section \ref{Sec:mainpart}. Below is a quick summary:
	\begin{itemize}
		\item First, we define (multi-input) connections compatible with a given (multi-input) $n$-anchor (see Definition \ref{def2.1}). This is a quite straightforward extension of usual connections of Lie algebroids (when $n=2$).
		\item Second, we introduce the curvature form $R^\nabla$ stemming from a connection $\nabla$ (see Eq \eqref{eq10}). We believe that this is a highly nontrivial invention of this note.
		\item Third,  we prove in Theorem \ref{th2.3} that certain good connections, which we call Filippov connections, fully determines Filippov algebroid structures.  This includes two points: (1) The $n$-ary bracket of any Filippov algebroid can be realized in a torsion free manner (see Eq \eqref{eq9}); (2) The generalized Jacobi identity is transformed to a constraint, called the Bianchi-Filippov identity (see Eq \eqref{eq12}) about the associated curvature $R^\nabla$.
	\end{itemize}
	
	We then illustrate a simple method via covariant differential operators to construct Filippov connections in Section \ref{Sec:Dxipair}.
	
	As vector bundles are fiber bundles with linear fibers, particular cases of homogeneity structures,  linear geometrical structures on vector bundles are of particular interest. (see also~\cite{GGR} on weighted structures for various geometric objects on manifolds with general homogeneity structures.)
	We finally show that  there exists a one-to-one correspondence between Filippov $n$-algebroid structures on a vector bundle $A$ of rank $n \geqslant 3$ and linear Nambu-Poisson structures on its dual bundle $A^*$ (see Theorem~\ref{prop3.3}).
	As an interesting application of our result, one is able to construct  linear Nambu-Poisson structures from Filippov connections (Corollary \ref{propo3.3}).

	In short summary, torsion-free connections subject to the Bianchi-Filippov identity are important geometric constraints for Filippov algebroids. It is well known that torsion free connections for Lie algebroids play a crucial role in various mathematical constructions, for example, in the construction of Poincar\'{e}-Birkhoff-Witt isomorphisms and Kapranov dg manifolds for Lie algebroid pairs~\cite{LSX}. Additionally, Bianchi identities are not only significant in Riemannian geometry, but also in Poisson geometry~\cite{BDPR}.
	We believe that our approach to Filippov algebroids  will be beneficial in this context.		
	\section{Preliminaries: Anchored bundles and Filippov algebroids}\label{Sec:pre}
	In this section, we recall the definition of Filippov algebroids from \cite{ref12}. There is an alternative characterization of Filippov algebroids in terms of certain $1$-derivations \cite{ref14}. It is important to note that $n\geqslant 2$ is an integer, although the only interesting situation is when $n\geqslant 3$.	Let us start with a   notion of $n$-anchored bundles.
	\begin{definition}\label{def1.1}
		An $n$-anchored vector bundle over a smooth manifold $M$ is pair $(A,\rho)$, where $A$ is a vector bundle over $M$ and $\rho\colon \wedge^{n-1}A \to TM$ is a vector bundle morphism, called $n$-anchor of $A$.
	\end{definition}
	\begin{definition}\label{def1.2}
		A Filippov $n$-algebroid over a smooth manifold $M$ is an $n$-anchored bundle $(A,\rho)$ over $M$ together with an $\mathbb{R}$-multilinear and skew-symmetric $n$-bracket  on the section space $\Gamma(A)$ of  $A$:
		\[
		[\cdotp,\cdots,\cdotp]:~\underbrace{ \Gamma(A)\times \cdotp \cdotp \cdotp \times  \Gamma(A)}_{n-\mbox{copies}} \to \Gamma(A)
		\]
		satisfying the following compatibility conditions:
		\begin{enumerate}
			\item[(1)]The $n$-anchor $\rho$ intertwines the $n$-bracket and the standard Lie bracket $[\cdotp,\cdotp]_{TM}$ on $\Gamma(TM)$:
			\begin{eqnarray}\label{eq1}
				[\rho(X_1\wedge \cdotp \cdotp \cdotp \wedge X_{n-1}),\rho(Y_1\wedge \cdotp \cdotp \cdotp \wedge Y_{n-1})]_{TM}
				=\sum_{i=1}^{n-1}\rho(Y_1\wedge \cdotp \cdotp \cdotp \wedge [X_1,\cdots,X_{n-1},Y_i]\wedge\cdotp \cdotp \cdotp \wedge Y_{n-1});			
			\end{eqnarray}
			\item[(2)]The $n$-bracket is a derivation with respect to $C^\infty(M)$-multiplications:
			\begin{equation}\label{eq2}
				[X_1,\cdots,X_{n-1},fY]=f[X_1,\cdots,X_{n-1},Y]+\rho(X_1\wedge \cdotp \cdotp \cdotp \wedge X_{n-1})(f)Y;
			\end{equation}
			\item[(3)]The following equation holds, to be called the (generalized) Jacobi identity (or Filippov identity):
			\begin{eqnarray}\label{eq3}
				[X_1,\cdots,X_{n-1},[Y_1,\cdots,Y_{n}]]
				=\sum_{i=1}^{n}[Y_1,\cdots,Y_{i-1},[X_1,\cdots,X_{n-1},Y_i],Y_{i+1},\cdots,Y_n],
			\end{eqnarray}
		\end{enumerate}	for all $X_{i},Y_{i}\in \Gamma(A)$ and $f\in C^\infty(M)$.
	\end{definition}
	
	Note that any Lie algebroid is a Filippov 2-algebroid. A Filippov $n$-algebra is a Filippov $n$-algebroid over the one-point base manifold.
	In fact, analogous to the Lie algebroid case (i.e. $n=2$ case), the condition $(1)$ in the above definition follows from the conditions $(2)$ and $(3)$.
	
	The following examples (due to~\cite{ref12}) illustrate two Filippov $n$-algebroid structures on the trivial tangent bundle $T\mathbb{R}^m$ of $\mathbb{R}^m$ for $m  \geqslant n\geqslant 2$.
	\begin{example}\label{example1.3}
		Consider the trivial n-anchored bundle $(T\mathbb{R}^m, \rho = 0)$. For each Filippov $n$-algebra structure on $\R^m$ with structure constants $\{c^j_{i_1,\cdots,i_n}\}$ and each  smooth function $g\in C^\infty(\mathbb{R}^m)$, we have a Filippov $n$-algebroid   $(T\mathbb{R}^m,0)$ whose bracket is defined by
		\begin{equation*}
			\left[f_1\frac{\partial}{\partial x_{i_1}},\cdots,f_n\frac{\partial}{\partial x_{i_n}}\right]=gf_1\cdots f_n \sum_{j=1}^{m}c^j_{i_1,\cdots,i_n}\frac{\partial}{\partial x_{j}}.
		\end{equation*}	
	\end{example}
	\begin{example}\label{example1.4}
		Equip $T\mathbb{R}^m$ with the $n$-anchor map $\rho$ defined by the tensor field
		\[
		dx_1 \wedge \cdotp \cdotp \cdotp \wedge dx_{n-1} \otimes \frac{\partial}{\partial x_1},
		\]
		where $x_1,\cdots,x_{n-1},x_n,\cdots,x_m$ are coordinates of $\mathbb{R}^m$. 		
		Then, the $n$-anchored bundle $(T\mathbb{R}^m ,\rho)$ together with the trivial $n$-bracket on generators $\frac{\partial}{\partial x_{i}}$ produces a (nontrivial) Filippov $n$-algebroid over $\mathbb{R}^m$.			
	\end{example}	
	
	We emphasize a crucial but often overlooked point in the literature: the presence of a Filippov $n$-bracket on an $n$-anchored bundle $(A,\rho)$ imposes a constraint on the rank of $\rho$ for every integer $n\geqslant 3$.
	
	\begin{proposition}\label{propo1.5}
		Let $(A,[\cdotp,\cdotp \cdotp \cdotp,\cdotp],\rho)$ be a Filippov $n$-algebroid for $n\geqslant 3$. Then the rank of the image of $\rho$ as a distribution on $M$ can not exceed   $1$, i.e., $\operatorname{rank}(\rho(\wedge^{n-1}A))\leqslant 1$.
	\end{proposition}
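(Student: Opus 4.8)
The idea is to differentiate the intertwining relation \eqref{eq1} along a function placed in its \emph{first} anchor slot, turning it into a fibrewise identity, and then to run a short linear-algebra argument in $T_pM$. Substituting $fX_1$ for $X_1$ in \eqref{eq1} with $f\in C^\infty(M)$ arbitrary: on the left $\rho(fX_1\wedge X_2\wedge\cdots\wedge X_{n-1})=f\,\rho(X_1\wedge\cdots\wedge X_{n-1})$, so the Leibniz rule for $[\,\cdot\,,\,\cdot\,]_{TM}$ contributes a term $-\,\rho(Y_1\wedge\cdots\wedge Y_{n-1})(f)\,\rho(X_1\wedge\cdots\wedge X_{n-1})$; on the right each inner bracket $[fX_1,X_2,\dots,X_{n-1},Y_i]$ is expanded by \eqref{eq2} (after moving $fX_1$ into the last slot by skew-symmetry), and the terms still carrying $f$ reassemble, again via \eqref{eq1}, into $f\,[\rho(X_1\wedge\cdots),\rho(Y_1\wedge\cdots)]_{TM}$, which cancels its counterpart on the left. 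What remains is
\begin{multline*}
\rho(Y_1\wedge\cdots\wedge Y_{n-1})(f)\;\rho(X_1\wedge\cdots\wedge X_{n-1})\\
=(-1)^{n}\sum_{i=1}^{n-1}\rho(X_2\wedge\cdots\wedge X_{n-1}\wedge Y_i)(f)\;\rho\bigl(Y_1\wedge\cdots\wedge Y_{i-1}\wedge X_1\wedge Y_{i+1}\wedge\cdots\wedge Y_{n-1}\bigr),
\end{multline*}
for all sections and all $f$; only $\mathrm{d}f_p$ enters, so at each $p$ this is a relation in $T_pM$ depending linearly on $\xi:=\mathrm{d}f_p\in T^{*}_pM$.

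Specialising $X_3=Y_3,\dots,X_{n-1}=Y_{n-1}$ and then $Y_2=X_1$ makes all but one summand on the right disappear, and yields $\rho(a\wedge c\wedge\omega)(f)\,\rho(a\wedge b\wedge\omega)=\rho(a\wedge b\wedge\omega)(f)\,\rho(a\wedge c\wedge\omega)$ for all $f$, all $a,b,c\in\Gamma(A)$ and all decomposable $\omega=X_3\wedge\cdots\wedge X_{n-1}$. Evaluating against covectors, this means $\rho(a\wedge b\wedge\omega)$ and $\rho(a\wedge c\wedge\omega)$ are pointwise proportional; equivalently, if two decomposable sections of $\wedge^{n-1}A$ differ in only one of their $n-1$ factors, then their images under $\rho$ are everywhere proportional. (When $n=2$ this content is vacuous, which is why Lie algebroids carry no such constraint.)

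Now assume, for contradiction, that $\dim\rho_p(\wedge^{n-1}A_p)\geqslant 2$ for some $p$. Since $\rho$ is a bundle map, $\rho_p(\wedge^{n-1}A_p)$ is spanned by the vectors $\rho_p(v_1\wedge\cdots\wedge v_{n-1})$; lifting factors to sections, choose $X_1,\dots,X_{n-1},Y_1,\dots,Y_{n-1}\in\Gamma(A)$ so that, writing $P:=\rho(X_1\wedge\cdots\wedge X_{n-1})$ and $Q:=\rho(Y_1\wedge\cdots\wedge Y_{n-1})$, the vectors $P|_p$ and $Q|_p$ are linearly independent; in particular $P|_p\neq0$. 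Evaluate the displayed identity at $p$. For each $i$, the $(n-1)$-vector $X_2\wedge\cdots\wedge X_{n-1}\wedge Y_i$ differs from $X_1\wedge\cdots\wedge X_{n-1}$ only in the factor $Y_i$ (in place of $X_1$), so by the previous step $\rho(X_2\wedge\cdots\wedge X_{n-1}\wedge Y_i)|_p=\lambda_i\,P|_p$ with $\lambda_i\in\R$. Hence the right-hand side at $p$ equals $(-1)^{n}\,\xi(P|_p)\sum_{i}\lambda_i\,\rho\bigl(Y_1\wedge\cdots\wedge X_1\wedge\cdots\wedge Y_{n-1}\bigr)|_p$, a scalar multiple of $\xi(P|_p)$. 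Choosing $\xi\in T^{*}_pM$ with $\xi(P|_p)=0$ and $\xi(Q|_p)=1$ then forces $P|_p=0$, a contradiction. Therefore $\operatorname{rank}\bigl(\rho(\wedge^{n-1}A)\bigr)\leqslant 1$ everywhere.

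The bulk of the remaining work is bookkeeping: confirming in the first step that the $f$-carrying terms on the right recombine exactly through \eqref{eq1} (signs included), and confirming in the second that the chosen specialisation really annihilates all but the one needed summand. The single idea making the argument work is that placing the test function in the \emph{first} anchor slot (rather than the last, as in \eqref{eq2}) produces coefficients that are themselves anchor values obtained from $\rho(X_1\wedge\cdots\wedge X_{n-1})$ by a one-factor change; the second step then collapses all of them onto the line $\R\,P|_p$, so one covector annihilates the whole sum.
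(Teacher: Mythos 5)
Your proof is correct and follows essentially the same route as the paper's: both polarize the intertwining relation \eqref{eq1} by inserting $fX_1$ into its first anchor slot, use the Leibniz rules on the two sides to cancel the $f$-linear parts, and extract from the remaining derivative terms a pointwise proportionality statement for anchors of decomposable elements sharing common factors. The only real difference is the endgame: the paper chains the proportionality through the intermediate element $X_1\wedge Y_2\wedge\cdots\wedge Y_{n-1}$ (with a separate case when that intermediate vanishes at $p$), whereas you apply the full polarized identity to a covector $\xi$ annihilating $\rho(X_1\wedge\cdots\wedge X_{n-1})|_p$ but not $\rho(Y_1\wedge\cdots\wedge Y_{n-1})|_p$, which collapses the right-hand side and yields the contradiction directly, neatly avoiding the paper's case analysis.
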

	\begin{proof}\renewcommand{\qedsymbol}{}
		Suppose that the image of $\rho$ at $p\in M$ is not trivial. So we can find an  open neighborhood $U$ of $p$ and some $Y_1\wedge \cdotp \cdotp \cdotp \wedge Y_{n-1}\in \Gamma(\wedge^{n-1}A)|_U$ such that  $\rho(Y_1\wedge \cdotp \cdotp \cdotp \wedge Y_{n-1}) $ is nowhere vanishing on $U$. 	The desired statement amounts to show that, if  $\rho(X_1\wedge \cdotp \cdotp \cdotp \wedge X_{n-1}) $ is also nowhere vanishing on   $U $, then   there exists some $c \in C^\infty(U)$ such that
		\[\rho(X_1\wedge \cdots \wedge X_{n-1})=c \rho(Y_1\wedge \cdots \wedge Y_{n-1}).	\]
		
		In fact, by the definition of Filippov $n$-algebroids, we obtain
		\begin{align*}
			&\quad [\rho(fX_1\wedge \cdots \wedge X_{n-1}),\rho(Y_1\wedge \cdots \wedge Y_{n-1})] \qquad \text{by Eq.~\eqref{eq1}}\\
			&= \sum_{i=1}^{n-1}\rho(Y_1\wedge \cdots \wedge Y_{i-1} \wedge[fX_1,X_2,\cdots,X_{n-1},Y_i] \wedge Y_{i+1}\wedge \cdots \wedge Y_{n-1}) \quad \text{by Eq.~\eqref{eq2}} \\
			&= f\sum_{i=1}^{n-1}\rho(Y_1\wedge \cdots \wedge Y_{i-1}\wedge [X_1,X_2,\cdots,X_{n-1},Y_i] \wedge Y_{i+1} \wedge \cdots \wedge Y_{n-1})			\\
			&\quad +\sum_{i=1}^{n-1}(-1)^{n-1}\rho(X_2\wedge\cdots\wedge X_{n-1}\wedge Y_i)(f)\rho(Y_1\wedge \cdots \wedge Y_{i-1}\wedge X_1\wedge Y_{i+1}\wedge \cdots \wedge Y_{n-1})  \quad \text{by Eq.~\eqref{eq1}} \\
			&= f[\rho(X_1\wedge \cdots \wedge X_{n-1}),\rho(Y_1\wedge \cdots \wedge Y_{n-1})]  \\
			&\quad +\sum_{i=1}^{n-1}(-1)^{n-1}\rho(X_2\wedge\cdots\wedge X_{n-1}\wedge Y_i)(f)\rho(Y_1\wedge \cdots \wedge Y_{i-1}\wedge X_1\wedge Y_{i+1}\wedge \cdots \wedge Y_{n-1}).
		\end{align*}
		Moreover, since $\rho$ is a morphism of vector bundles, we have
		\begin{eqnarray*}
			&&[\rho(fX_1\wedge \cdotp \cdotp \cdotp \wedge X_{n-1}),\rho(Y_1\wedge \cdotp \cdotp \cdotp \wedge Y_{n-1})] \\
			&=& [f\rho(X_1\wedge \cdotp \cdotp \cdotp \wedge X_{n-1}),\rho(Y_1\wedge \cdotp \cdotp \cdotp \wedge Y_{n-1})]  \\
			&=&f[\rho(X_1\wedge \cdotp \cdotp \cdotp \wedge X_{n-1}),\rho(Y_1\wedge \cdotp \cdotp \cdotp \wedge Y_{n-1})]-\rho(Y_1\wedge \cdotp \cdotp \cdotp \wedge Y_{n-1})(f)\rho(X_1\wedge \cdotp \cdotp \cdotp \wedge X_{n-1}).	
		\end{eqnarray*}
		Setting $Y_1 = X_1$ in the above two equations,  we obtain
		\begin{equation}\label{eq6}
			\rho(X_ 1 \wedge \cdotp \cdotp \cdotp \wedge X_{n-1})(f)\rho(X_1\wedge Y_2 \wedge \cdotp \cdotp \cdotp \wedge Y_{n-1})= -\rho(X_1\wedge Y_2 \wedge \cdotp \cdotp \cdotp \wedge Y_{n-1})(f)\rho(X_1\wedge \cdotp \cdotp \cdotp \wedge X_{n-1}).
		\end{equation}
		
		Using Eq.~\eqref{eq6}, we have
		\begin{align*}
			\rho(X_1\wedge \cdotp \cdotp \cdotp \wedge X_{n-1}) &= g_1\rho(X_1 \wedge Y_2\wedge \cdotp \cdotp \cdotp \wedge Y_{n-1}) = -g_1 \rho(Y_2 \wedge X_1 \wedge \cdots \wedge Y_{n-1}) \\
			&= -g_1g_2 \rho(Y_2 \wedge Y_1 \wedge \cdotp \cdotp \cdotp \wedge Y_{n-1}) = g_1g_2 \rho(Y_1\wedge  \cdotp \cdotp \cdotp \wedge Y_{n-1}),
		\end{align*}
		for some $g_1,g_2 \in C^\infty(U)$.
		\begin{enumerate}
			\item[(1)] If $\rho(X_1\wedge Y_2\wedge \cdotp \cdotp \cdotp \wedge Y_{n-1}) $ is nowhere vanishing on $U$, then the vector fields $\rho(X_1\wedge \cdotp \cdotp \cdotp \wedge X_{n-1})$ and $\rho(Y_1\wedge \cdotp \cdotp \cdotp \wedge Y_{n-1})$ must be $C^\infty(U)$-linearly dependent.
			\item[(2)] If $\rho(X_1\wedge Y_2\wedge \cdotp \cdotp \cdotp \wedge Y_{n-1})|_p = 0$, then we let $\tilde{X}_1=X_1+Y_1$ and consider $\rho(\tilde{X}_1\wedge Y_2\wedge \cdotp \cdotp \cdotp \wedge Y_{n-1}) $, which is nowhere vanishing on $U$. By arguments in (1) as above, $\rho(\tilde{X}_1\wedge X_2\wedge \cdotp \cdotp \cdotp \wedge X_{n-1})$ and $\rho(Y_1\wedge \cdotp \cdotp \cdotp \wedge Y_{n-1})$ are $C^\infty(U)$-linearly dependent, and we obtain the desired statement as well.
		\end{enumerate}
	\end{proof}
	
	\section{The geometric constraints of Filippov algebroids}\label{Sec:mainpart}
	\subsection{The main theorem}
	In this section, we characterize Filippov algebroids via connections on the underlying anchored bundles.
	\begin{definition}\label{def2.1}
		A connection on an $n$-anchored bundle $(A,\rho)$ is a bilinear map $\nabla: \Gamma(\wedge^{n-1}A)\times \Gamma(A)\to \Gamma(A)$ satisfying two conditions:
		\begin{eqnarray*}		
			\nabla_{fX_1\wedge \cdots \wedge X_{n-1}}X_n&=&f\nabla_{X_1\wedge \cdots \wedge X_{n-1}}X_n, \\\mbox{ and }~
			\nabla_{X_1\wedge \cdots \wedge X_{n-1}}(fX_n)&=&f\nabla_{X_1\wedge \cdots \wedge X_{n-1}}X_n+\rho(X_1\wedge \cdots \wedge X_{n-1})(f)X_n
		\end{eqnarray*}
		for all $X_1, \cdots, X_{n}\in \Gamma(A)$ and $f \in C^\infty(M)$.
	\end{definition}
	To see the existence of such a connection, one takes  a $TM$-connection on $A$, say $\nabla^{TM}$, and then define $\nabla$ on the $n$-anchored bundle $(A,\rho)$ as the  pullback of $\nabla^{TM}$:
	\[
	\nabla_{X_1\wedge \cdots \wedge X_{n-1}}X_n:=\nabla^{TM}_{\rho(X_1\wedge \cdots \wedge X_{n-1})}X_n.
	\]
	
	The key point of this note is that any connection $\nabla$ on $(A, \rho)$ induces a skew-symmetric $n$-bracket on $\Gamma(A)$ defined by
	\begin{eqnarray}\label{eq9}
		[X_1,\cdots,X_n]^{\nabla}\notag
		&:=& \sum_{i=1}^{n}(-1)^{n+i}\nabla_{X_{1}\wedge \cdotp \cdotp \cdotp \widehat{X_{i}}\cdotp \cdotp \cdotp \wedge X_{n} }X_i\notag	\\
		&=& \sum_{i=1}^{n}(-1)^{(n-1)i}\nabla_{X_{i+1}\wedge \cdotp \cdotp \cdotp \wedge X_{n}\wedge X_1\wedge \cdotp \cdotp \cdotp \wedge X_{i-1}}X_i.
	\end{eqnarray}	
	
	For computational convenience,  we denote the covariant derivative on $\Gamma(A)$ along $X_1, \cdots, X_{n-1} \in \Gamma(A)$ by
	\[
	X^{\nabla}_{1\cdots n-1}:=[X_1,\cdots,X_{n-1},-]^{\nabla} \colon \Gamma(A) \to \Gamma(A).
	\]
	It extends to all sections in $\wedge^\bullet A$ by
	\[
	{X_{1\cdots n-1}^\nabla}(Y_1\wedge\cdotp \cdotp \cdotp \wedge Y_{m}):=\sum_{i=1}^{m}Y_1\wedge\cdotp\cdotp\cdotp \wedge Y_{i-1} \wedge X^{\nabla}_{1\cdots n-1}(  Y_i)\wedge Y_{i+1}\wedge \cdotp\cdotp\cdotp\wedge Y_{m}.
	\]
	We then introduce the curvature form of $\nabla$, an operation
	\[
	R^\nabla(-\cdots-,-)(-):~\underbrace{\Gamma( A)\times \cdots \times\Gamma(A)}_{(n-1)-\mbox{copies}}\times \Gamma(\wedge^{n-1}A) \times \Gamma(A) \to \Gamma(A),
	\]
	defined by
	\begin{eqnarray}\label{eq10}
		&&R^\nabla(X_1,\cdots,X_{n-1},Y_1\wedge\cdotp \cdotp \cdotp \wedge Y_{n-1})(Z)\notag
		\\
		&:=&[ {X_{1\cdots n-1}^\nabla},\nabla_{Y_1\wedge\cdotp \cdotp \cdotp \wedge Y_{n-1}}](Z)-\nabla_{ {X_{1\cdots n-1}^\nabla}(Y_1\wedge\cdotp \cdotp \cdotp \wedge Y_{n-1})}Z\notag
		\\
		&:=& X^{\nabla}_{1\cdots n-1}\nabla_{Y_1\wedge\cdotp \cdotp \cdotp \wedge Y_{n-1}}Z -\nabla_{Y_1\wedge\cdotp \cdotp \cdotp \wedge Y_{n-1}}X^{\nabla}_{1\cdots n-1} Z
		-\nabla_{ {X_{1\cdots n-1}^\nabla}(Y_1\wedge\cdotp \cdotp \cdotp \wedge Y_{n-1})}Z,
	\end{eqnarray}
	for all $X_1, \cdots, X_{n-1}, Y_1, \cdots, Y_{n-1}, Z\in \Gamma(A)$ and $n\geqslant 3$.
	When $n=3$, it reads
	\begin{eqnarray*}
		R^\nabla(X_1,X_2,Y_1\wedge Y_2)(Z)&=&X^{\nabla}_{1 2}\nabla_{Y_1\wedge Y_{2}}Z -\nabla_{Y_1\wedge Y_{2}}X^{\nabla}_{12} Z-\nabla_{ {X_{12}^\nabla}(Y_1\wedge Y_{2})}Z
		\\
		&=&[X_1,X_2,\nabla_{Y_1\wedge Y_{2}}Z]^\nabla-\nabla_{Y_1\wedge Y_{2}}[X_1,X_2,Z]^\nabla
		\\
		&&\ -\nabla_{[X_1,X_2,Y_1]^\nabla\wedge Y_2+Y_1\wedge[X_1,X_2,Y_2]^\nabla}Z.
	\end{eqnarray*}
	When $n=4$, the expression of $R^\nabla$ consists of twenty terms. As $n$ gets larger,   more terms are involved.

	It is easy to verify from the defining Eq $\eqref{eq10}$ that the curvature $R^\nabla$ is $C^\infty(M)$-linear with respect to the argument $Y_1\wedge\cdots \wedge Y_{n-1}$.
	However, $R^\nabla$ need not be tensorial in $X_1,\cdots,X_{n-1}$ although it is skew-symmetric in these arguments.
	
	\begin{definition}\label{def2.2}
		A connection $\nabla$ on an $n$-anchored bundle $(A, \rho)$ is called a Filippov connection if the following two conditions are true:
		\begin{enumerate}
			\item[(1)] The curvature $R^\nabla$ is $C^\infty(M)$-linear with respect to its last argument, i.e., for all $f\in C^\infty(M)$ and all $X_1,\cdots,X_{n-1},Y_1,\cdots,Y_n \in \Gamma(A)$, we have
			\begin{equation*}
				R^\nabla(X_1,\cdots,X_{n-1},Y_1\wedge\cdotp\cdotp\cdotp\wedge Y_{n-1})(fY_n) = fR^\nabla(X_1,\cdots,X_{n-1},Y_1\wedge\cdotp\cdotp\cdotp\wedge Y_{n-1})(Y_n);			
			\end{equation*}
			\item[(2)] The following equality holds, to be called the Bianchi-Filippov identity:
			\begin{equation}\label{eq12}	
				0 = \sum_{i=0}^{n-1}(-1)^{(n-1)i}R^\nabla(X_1,\cdots,X_{n-1},Y_{i+1}\wedge \cdotp\cdotp \cdotp \wedge Y_{n}\wedge Y_1\wedge\cdotp\cdotp\cdotp\wedge Y_{i-1})Y_i,	
			\end{equation}
			where $Y_0$ means $Y_n$.
		\end{enumerate}
	\end{definition}
	
	We are ready to state our main theorem, which characterizes Filippov algebroids fully by Filippov connections.
	\begin{theorem}\label{th2.3}
		Let $(A,\rho)$ be an $n$-anchored bundle. If  $\nabla$ is a Filippov connection on $(A,\rho)$, then $(A,\rho,[\cdotp,\cdots,\cdotp]^{\nabla})$ is a Filippov $n$-algebroid, where $[\cdotp,\cdots,\cdotp]^\nabla$ is the   $n$-bracket given by Eq \eqref{eq9}. Moreover, any Filippov $n$-algebroid structure on $(A,\rho)$ arises from a Filippov connection in this way.
	\end{theorem}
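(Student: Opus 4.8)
The plan is to prove the two directions separately, both by carefully unwinding the definitions. For the first (sufficiency) direction, suppose $\nabla$ is a Filippov connection and set $[\cdotp,\cdots,\cdotp] := [\cdotp,\cdots,\cdotp]^\nabla$ as in Eq.~\eqref{eq9}. Skew-symmetry is immediate from the formula, so we must verify conditions (2) and (3) of Definition~\ref{def1.2} — since, as noted after that definition, condition (1) follows from (2) and (3). Condition (2), the Leibniz rule, should drop out directly: when we replace $Y_n$ by $fY_n$ in Eq.~\eqref{eq9}, the single term with $i=n$ (namely $\nabla_{X_1\wedge\cdots\wedge X_{n-1}}(fX_n)$) produces the anchor term $\rho(X_1\wedge\cdots\wedge X_{n-1})(f)\,Y_n$ by the defining property of a connection, while every term with $i<n$ has the $fY_n$ sitting \emph{inside} the wedge subscript, where $\nabla$ is $C^\infty(M)$-linear; so those contribute $f$ times the original term. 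Wait — I should double-check: in the $i<n$ terms the section $fY_n$ appears in the wedge-product subscript $X_1\wedge\cdots\widehat{X_i}\cdots\wedge X_n$, and by the first axiom in Definition~\ref{def2.1} scalars pull out of that slot, so indeed each such term is $f$ times the $Y_n$-version. That settles (2).

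The heart of the first direction is translating the Filippov/Jacobi identity \eqref{eq3} into the Bianchi-Filippov identity \eqref{eq12}. The strategy is to compute the ``Jacobiator''
\[
J(X_1,\ldots,X_{n-1};Y_1,\ldots,Y_n) := [X_1,\ldots,X_{n-1},[Y_1,\ldots,Y_n]] - \sum_{i=1}^n [Y_1,\ldots,[X_1,\ldots,X_{n-1},Y_i],\ldots,Y_n]
\]
entirely in terms of $\nabla$, using the notation $X^\nabla_{1\cdots n-1} = [X_1,\ldots,X_{n-1},-]^\nabla$ and its extension to $\wedge^\bullet A$. The first term is $X^\nabla_{1\cdots n-1}$ applied to $[Y_1,\ldots,Y_n]^\nabla = \sum_j (-1)^{(n-1)j}\nabla_{Y_{j+1}\wedge\cdots\wedge Y_{j-1}} Y_j$; the subtracted sum, by the Leibniz-type extension of $X^\nabla_{1\cdots n-1}$, regroups into terms where $X^\nabla_{1\cdots n-1}$ either hits the outer $\nabla$ (giving commutator terms $[X^\nabla_{1\cdots n-1},\nabla_{Y_{j+1}\wedge\cdots}]$) or hits one of the $Y$'s inside the wedge subscript (giving, after reindexing, terms of the shape $\nabla_{X^\nabla_{1\cdots n-1}(Y_{j+1}\wedge\cdots\wedge Y_{j-1})} Y_j$). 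Matching these against the definition \eqref{eq10} of $R^\nabla$, one finds $J = \sum_{i} (\pm) R^\nabla(X_1,\ldots,X_{n-1}, Y_{i+1}\wedge\cdots\wedge Y_{i-1}) Y_i$, i.e.\ $J$ equals exactly the left-hand side of \eqref{eq12}. Hence the Jacobi identity holds iff the Bianchi-Filippov identity holds. This bookkeeping — getting the signs $(-1)^{(n-1)i}$ to line up and confirming that the ``$Y_0 = Y_n$'' cyclic convention matches — is where I expect the real work to be, and it is the main obstacle; doing it cleanly probably warrants introducing the operator $X^\nabla_{1\cdots n-1}$ as a derivation of the wedge algebra (already done in the excerpt) so that the Jacobiator computation becomes a formal manipulation of derivations and $\nabla$.

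For the converse (necessity) direction, start from a Filippov $n$-algebroid $(A,\rho,[\cdotp,\cdots,\cdotp])$. We must produce a Filippov connection $\nabla$ with $[\cdotp,\cdots,\cdotp]^\nabla = [\cdotp,\cdots,\cdotp]$. The idea mirrors the $n=2$ case recalled in the introduction: pick any connection $\nabla^0$ on $(A,\rho)$ (these exist, e.g.\ by pulling back a $TM$-connection along $\rho$), form its associated bracket $[\cdotp,\cdots,\cdotp]^{\nabla^0}$, and let $T := [\cdotp,\cdots,\cdotp] - [\cdotp,\cdots,\cdotp]^{\nabla^0}$ be the ``torsion.'' Using Eq.~\eqref{eq2} for both brackets, $T$ is $C^\infty(M)$-linear in every argument, hence a tensor $T \in \Gamma(\wedge^n A^* \otimes A)$. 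Then set
\[
\nabla_{X_1\wedge\cdots\wedge X_{n-1}} X_n := \nabla^0_{X_1\wedge\cdots\wedge X_{n-1}} X_n + \tfrac{1}{n}(-1)^{n-1} T(X_1,\ldots,X_n)
\]
(or a suitable variant — one should check the constant and sign by plugging into \eqref{eq9}); the extra term is tensorial, so $\nabla$ is still a connection, and by construction $[\cdotp,\cdots,\cdotp]^\nabla = [\cdotp,\cdots,\cdotp]$. Finally, since the original bracket satisfies the Leibniz rule and the Jacobi identity, the computations of the first direction run in reverse: $C^\infty(M)$-linearity of the bracket in the last slot forces condition (1) of Definition~\ref{def2.2} for $R^\nabla$ (this is the analogue of ``$R$ tensorial in its third argument''), and the Jacobi identity forces the Bianchi-Filippov identity \eqref{eq12}. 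Thus $\nabla$ is a Filippov connection and the correspondence is established. I would present the tensor-correction step carefully, as choosing the right symmetrization so that $[\cdotp,\cdots,\cdotp]^\nabla$ recovers the given bracket on the nose is the one genuinely delicate point in this direction.
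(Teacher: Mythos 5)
Your proposal matches the paper's proof in all essentials: your Jacobiator-equals-cyclic-sum-of-curvatures computation is exactly the paper's Lemma~\ref{lemma2.5}, and your torsion-tensor correction is exactly the paper's Lemma~\ref{lemma2.6}, where the correct constant is simply $+\tfrac{1}{n}$ with no extra sign (the $i=n$ term of Eq.~\eqref{eq9} carries coefficient $(-1)^{(n-1)n}=+1$, and the skew-symmetry of the torsion tensor cancels the cyclic signs $(-1)^{(n-1)i}$ term by term, so your factor $(-1)^{n-1}$ would be wrong for even $n$). The only organizational difference is that the paper proves the equivalence between tensoriality of $R^\nabla$ in its last slot and the anchor-compatibility condition directly (Lemma~\ref{lemma2.4}) rather than invoking the unproved remark that condition (1) of Definition~\ref{def1.2} follows from (2) and (3).
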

	
	\subsection{Proof of Theorem~\ref{th2.3}}
	The proof of Theorem~\ref{th2.3} is divided, and will follow immediately from the three lemmas below.
	\begin{lemma}\label{lemma2.4}
		Let $\nabla$ be a connection on  an $n$-anchored bundle $(A,\rho)$.
		The curvature $R^\nabla$  satisfies the first condition of Definition \ref{def2.2}    if and only if the anchor $\rho$ intertwines the induced $n$-bracket $[\cdotp,\cdots,\cdotp]^\nabla$   and the Lie bracket $[\cdotp,\cdotp]_{TM}$ on $\Gamma(TM)$, i.e.,
		\begin{eqnarray*}
			[\rho(X_1\wedge \cdotp \cdotp \cdotp \wedge X_{n-1}),\rho(Y_1\wedge \cdotp \cdotp \cdotp \wedge Y_{n-1})]_{TM}
			=\sum_{i=1}^{n-1}\rho(Y_1\wedge \cdotp \cdotp \cdotp \wedge [X_1,\cdots,X_{n-1},Y_i]^{\nabla}\wedge\cdotp \cdotp \cdotp \wedge Y_{n-1}),		
		\end{eqnarray*}
		for all $X_1, \cdots, X_{n-1}, Y_1, \cdots, Y_{n-1} \in \Gamma(A)$.
	\end{lemma}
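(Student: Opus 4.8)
The plan is to compute, directly from the definitions, how $R^\nabla(X_1,\dots,X_{n-1},Y_1\wedge\cdots\wedge Y_{n-1})$ fails to be $C^\infty(M)$-linear in its last argument, and to recognize that the obstruction is precisely the difference between the two sides of the asserted intertwining identity, multiplied by $Z$. This is the $n$-ary analogue of the classical fact recalled in the Introduction that, for a connection on a Lie algebroid, the anchor is a morphism of Lie algebras if and only if $R^\nabla$ is tensorial in its third slot.

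First I would record two Leibniz-type identities. From the defining formula \eqref{eq9} and the two axioms of Definition~\ref{def2.1} --- using that $\nabla$ is $C^\infty(M)$-linear in each individual factor of the wedge $X_1\wedge\cdots\wedge X_{n-1}$, since $f(X_1\wedge\cdots\wedge X_{n-1})=(fX_1)\wedge X_2\wedge\cdots=X_1\wedge(fX_2)\wedge\cdots$ --- one verifies that the operator $X^{\nabla}_{1\cdots n-1}=[X_1,\dots,X_{n-1},-]^\nabla$ is a derivation of $\Gamma(A)$ over the vector field $a:=\rho(X_1\wedge\cdots\wedge X_{n-1})$, that is, $X^{\nabla}_{1\cdots n-1}(fZ)=f\,X^{\nabla}_{1\cdots n-1}(Z)+a(f)\,Z$. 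Combined with the connection axiom $\nabla_W(fZ)=f\nabla_WZ+\rho(W)(f)Z$ for $W\in\Gamma(\wedge^{n-1}A)$, and with the extension of $X^{\nabla}_{1\cdots n-1}$ to $\wedge^\bullet A$, this also gives $\rho\big(X^{\nabla}_{1\cdots n-1}(Y_1\wedge\cdots\wedge Y_{n-1})\big)=\sum_{i=1}^{n-1}\rho(Y_1\wedge\cdots\wedge[X_1,\dots,X_{n-1},Y_i]^\nabla\wedge\cdots\wedge Y_{n-1})$.

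Next I would substitute $fZ$ into the three terms of \eqref{eq10} and push $f$ outward using these two rules, abbreviating $b:=\rho(Y_1\wedge\cdots\wedge Y_{n-1})$. The strictly $f$-linear contributions recombine into $f\,R^\nabla(\dots)(Z)$; the two first-order terms $a(f)\,\nabla_{Y_1\wedge\cdots\wedge Y_{n-1}}Z$ and $b(f)\,X^{\nabla}_{1\cdots n-1}Z$ each occur once with a plus sign and once with a minus sign and cancel; and what survives collapses to
\[
R^\nabla(\dots)(fZ)-f\,R^\nabla(\dots)(Z)=\Big([a,b]_{TM}(f)-\rho\big(X^{\nabla}_{1\cdots n-1}(Y_1\wedge\cdots\wedge Y_{n-1})\big)(f)\Big)Z .
\]
Since $Z\in\Gamma(A)$ and $f\in C^\infty(M)$ are arbitrary, this expression vanishes identically if and only if the vector field $[a,b]_{TM}-\rho\big(X^{\nabla}_{1\cdots n-1}(Y_1\wedge\cdots\wedge Y_{n-1})\big)$ is zero (evaluate at a point where some $Z$ does not vanish and choose $f$ whose differential there does not annihilate that vector field), which, by the second identity of the previous step, is exactly the intertwining formula in the statement. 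Both implications of the lemma then read off from this single equivalence.

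I do not anticipate a genuine obstacle: this is essentially a bookkeeping exercise. The two places needing care are (i) verifying the derivation property of $X^{\nabla}_{1\cdots n-1}$ with the correct signs coming from \eqref{eq9}, and (ii) checking that the terms involving a single derivative of $f$ really pair up and cancel --- this is exactly the step at which the particular shape of the curvature \eqref{eq10}, a commutator of $X^{\nabla}_{1\cdots n-1}$ with $\nabla_{Y_1\wedge\cdots\wedge Y_{n-1}}$ corrected by $\nabla_{X^{\nabla}_{1\cdots n-1}(Y_1\wedge\cdots\wedge Y_{n-1})}$, is used.
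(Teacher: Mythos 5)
Your proof is correct and follows essentially the same route as the paper: expand $R^\nabla(X_1,\dots,X_{n-1},Y_1\wedge\cdots\wedge Y_{n-1})(fZ)$ term by term via the Leibniz rules, observe that the single-derivative terms $a(f)\nabla_{Y_1\wedge\cdots\wedge Y_{n-1}}Z$ and $b(f)X^\nabla_{1\cdots n-1}Z$ cancel, and identify the residual obstruction as $\bigl([a,b]_{TM}-\rho(X^\nabla_{1\cdots n-1}(Y_1\wedge\cdots\wedge Y_{n-1}))\bigr)(f)\,Z$. The only (minor) addition over the paper's write-up is that you make explicit the final pointwise argument for why the vanishing of this expression for all $f$ and $Z$ forces the vector fields to agree.
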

	\begin{proof}
		By the definition of curvature, we have
		\begin{align*}
			&\quad R^\nabla(X_1,\cdots,X_{n-1},Y_1\wedge \cdots \wedge Y_{n-1})(fY_n) \qquad \text{by Eq~\eqref{eq10}}\\
			&= [ {X_{1\cdots n-1}^\nabla},\nabla_{Y_1\wedge\cdots \wedge Y_{n-1}}](fY_n)-\nabla_{ {X_{1\cdots n-1}^\nabla}(Y_1\wedge\cdots \wedge Y_{n-1})}(fY_n) \\
			&= X^{\nabla}_{1\cdots n-1}\nabla_{Y_1\wedge\cdots \wedge Y_{n-1}}(fY_n) -\nabla_{Y_1\wedge\cdots \wedge Y_{n-1}}X^{\nabla}_{1 \cdots n-1}(fY_n)
			-\nabla_{ {X_{1\cdots n-1}^\nabla}(Y_1\wedge\cdots \wedge Y_{n-1})}(fY_n) \\
			&= fR^\nabla(X_1,\cdots,X_{n-1},Y_1\wedge\cdots \wedge Y_{n-1})(Y_n) + \rho(X_1\wedge\cdots \wedge X_{n-1}) \rho(Y_1\wedge\cdots \wedge Y_{n-1})(f)Y_n \\
			&\quad -\rho(Y_1\wedge\cdots \wedge Y_{n-1})\rho(X_1\wedge\cdots \wedge X_{n-1})(f)Y_n -\sum_{i=1}^{n-1}\rho(Y_1\wedge\cdots \wedge X^{\nabla}_{1\cdots n-1}(  Y_i)\wedge \cdots \wedge Y_{n-1})(f)Y_n.
		\end{align*}
		Hence, the curvature $R^\nabla$ is $C^\infty(M)$-linear with respect to its last argument if and only if
		\begin{eqnarray*}
			&&\rho(X_1\wedge\cdots \wedge X_{n-1})\rho(Y_1\wedge \cdots \wedge Y_{n-1})-\rho(Y_1\wedge\cdots \wedge Y_{n-1})\rho(X_1\wedge\cdots \wedge X_{n-1}) \\
			&=&\sum_{i=1}^{n-1}\rho(Y_1\wedge\cdots \wedge X^{\nabla}_{1\cdots n-1}(Y_i) \wedge \cdots \wedge Y_{n-1}) \\
			&=&\sum_{i=1}^{n-1}\rho(Y_1\wedge\cdots \wedge [X_1,\cdots,X_{n-1},Y_i]^\nabla\wedge \cdots \wedge Y_{n-1}).
		\end{eqnarray*}
	\end{proof}
	\begin{lemma}\label{lemma2.5}
		Let $\nabla$ be a connection on  an $n$-anchored bundle $(A,\rho)$.
		The curvature $R^\nabla$  satisfies the second condition of Definition \ref{def2.2}, i.e. the Bianchi-Filippov identity~\eqref{eq12}, if and only if the induced $n$-bracket $[\cdotp,\cdots,\cdotp]^\nabla$ satisfies the (generalized) Jacobi identity \eqref{eq3}.
	\end{lemma}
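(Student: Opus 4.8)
The plan is to prove, for an arbitrary connection $\nabla$ on $(A,\rho)$ and for all $X_1,\cdots,X_{n-1},Y_1,\cdots,Y_n\in\Gamma(A)$, the single identity
\begin{multline*}
\sum_{i=0}^{n-1}(-1)^{(n-1)i}R^\nabla(X_1,\cdots,X_{n-1},Y_{i+1}\wedge\cdots\wedge Y_{n}\wedge Y_1\wedge\cdots\wedge Y_{i-1})(Y_i)\\
= [X_1,\cdots,X_{n-1},[Y_1,\cdots,Y_n]^\nabla]^\nabla-\sum_{k=1}^{n}[Y_1,\cdots,[X_1,\cdots,X_{n-1},Y_k]^\nabla,\cdots,Y_n]^\nabla,
\end{multline*}
where, as in \eqref{eq12}, $Y_0$ is read as $Y_n$, and on the right $[X_1,\cdots,X_{n-1},Y_k]^\nabla$ occupies the $k$-th slot. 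Once this identity is in hand the lemma is immediate: the Bianchi--Filippov identity \eqref{eq12} asserts that the left-hand side vanishes for all choices of sections, hence it is equivalent to the equality of the two sides of the generalized Jacobi identity \eqref{eq3} for $[\cdot,\cdots,\cdot]^\nabla$. Note that this reduces everything to one algebraic identity between sections of $A$; no use is made of either condition in Definition~\ref{def2.2}.

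To set up the bookkeeping I would write $D:=X_{1\cdots n-1}^\nabla=[X_1,\cdots,X_{n-1},-]^\nabla$ and, for $1\le i\le n$, abbreviate $\widehat{Y_i}:=Y_{i+1}\wedge\cdots\wedge Y_n\wedge Y_1\wedge\cdots\wedge Y_{i-1}\in\Gamma(\wedge^{n-1}A)$, the cyclically ordered wedge of the $n-1$ sections other than $Y_i$. Reading $Y_0$ as $Y_n$, the index in \eqref{eq12} runs through a complete set of residues modulo $n$, and the ranges $\{0,\dots,n-1\}$ and $\{1,\dots,n\}$ are interchangeable because $(n-1)n$ is even, so that the $i=0$ term coincides with the $i=n$ term. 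In this notation Eq.~\eqref{eq9} reads $[Y_1,\cdots,Y_n]^\nabla=\sum_i(-1)^{(n-1)i}\nabla_{\widehat{Y_i}}Y_i$, and the derivation extension of $D$ recalled before Eq.~\eqref{eq10} gives $D(\widehat{Y_i})=\sum_{j\ne i}\widehat{Y_i}\big|_{Y_j\mapsto DY_j}$, the sum of the $n-1$ wedges obtained by replacing, in place, exactly one factor $Y_j$ of $\widehat{Y_i}$ by $DY_j$.

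Now I would substitute the definition \eqref{eq10} of $R^\nabla$ into the cyclic sum on the left and separate the outcome into three families of terms. The terms $(-1)^{(n-1)i}D\nabla_{\widehat{Y_i}}Y_i$ sum, by $\mathbb{R}$-linearity of $D$, to $D[Y_1,\cdots,Y_n]^\nabla=[X_1,\cdots,X_{n-1},[Y_1,\cdots,Y_n]^\nabla]^\nabla$. The terms $-(-1)^{(n-1)i}\nabla_{\widehat{Y_i}}(DY_i)$ equal $(-1)$ times the partial sum of the right-hand side of \eqref{eq3} made up, for each $k$, of the unique summand of $[Y_1,\cdots,DY_k,\cdots,Y_n]^\nabla$ (expanded by \eqref{eq9}) in which $\nabla$ differentiates the modified entry $DY_k$. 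Finally, expanding $D(\widehat{Y_i})$ and then exchanging the order of the resulting double summation -- a Fubini-type reindexing over the pair ``which $Y$ gets covariantly differentiated'' versus ``which $Y$ gets hit by $D$'' -- shows that the terms $-(-1)^{(n-1)i}\nabla_{D(\widehat{Y_i})}Y_i$ sum to $(-1)$ times the remaining summands of the right-hand side of \eqref{eq3}. Adding the three families yields the displayed identity.

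The one point that is not purely mechanical is this last matching: one must check that replacing a factor $Y_k$ of $\widehat{Y_i}$ by $DY_k$ keeps that factor in the same position, so that the sign $(-1)^{(n-1)i}$ attached to the ``argument $Y_i$'' summand in \eqref{eq9} is not altered, and that the derivation extension of $D$ produces precisely this family of singly-modified wedges with the correct multiplicities. As a safeguard I would carry out the $n=3$ case in full first -- twelve terms, all signs equal to $+1$, matched against the three-term Jacobi identity -- and then observe that the general argument is word-for-word the same once the signs $(-1)^{(n-1)i}$ are tracked.
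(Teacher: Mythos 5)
Your proposal is correct and follows essentially the same route as the paper: substitute the definition \eqref{eq10} of $R^\nabla$ into the cyclic sum, split into the three families of terms, identify the first with $[X_1,\cdots,X_{n-1},[Y_1,\cdots,Y_n]^\nabla]^\nabla$ via \eqref{eq9}, and absorb the other two into $-\sum_k[Y_1,\cdots,[X_1,\cdots,X_{n-1},Y_k]^\nabla,\cdots,Y_n]^\nabla$ by the Fubini-type reindexing you describe. The sign check you flag (the replaced factor staying in place, and the interchangeability of the index ranges since $(n-1)n$ is even) is exactly the point the paper's computation uses implicitly.
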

	\begin{proof}\renewcommand{\qedsymbol}{}
		The statement follows directly from the following lines of computation:	
		\begin{align*}
			&\quad \sum_{i=0}^{n-1}(-1)^{(n-1)i} R^\nabla(X_1,\cdots,X_{n-1},Y_{i+1} \wedge \cdots \wedge Y_{n} \wedge Y_1 \wedge \cdots \wedge Y_{i-1})Y_i    \qquad \text{by Eq.~\eqref{eq10}}       \\
			&= \sum_{i=0}^{n-1}(-1)^{(n-1)i} \left( [X^{\nabla}_{1\cdots n-1}, \nabla_{Y_{i+1} \wedge \cdots \wedge Y_{n} \wedge Y_1 \wedge \cdots \wedge Y_{i-1}}](Y_i) - \nabla_{ {X_{1\cdots n-1}^\nabla}(Y_{i+1} \wedge \cdots \wedge Y_{n} \wedge Y_1 \wedge \cdots \wedge Y_{i-1})}Y_i \right) \\
			&= \left[X_1, \cdots, X_{n-1}, \sum_{i=0}^{n-1}(-1)^{(n-1)i}\nabla_{Y_{i+1} \wedge \cdots \wedge Y_{n} \wedge Y_1 \wedge \cdots \wedge Y_{i-1}}(Y_i)\right]^\nabla \\
			&\quad -  \sum_{i=0}^{n-1}(-1)^{(n-1)i} \left(\nabla_{Y_{i+1} \wedge \cdots \wedge Y_{n} \wedge Y_1 \wedge \cdots \wedge Y_{i-1}} X^{\nabla}_{1\cdots n-1}(Y_i)  + \sum_{j\neq i} \nabla_{Y_{i+1} \wedge \cdots \wedge {X_{1\cdots n-1}^\nabla}(Y_{j}) \wedge \cdots \wedge Y_{i-1}}Y_i\right) \quad \text{by Eq.~\eqref{eq9}} \\
			&=  [X_1,\cdots,X_{n-1},[Y_1,\cdots,Y_n]^\nabla]^\nabla -  \sum_{i=0}^{n-1}(-1)^{(n-1)i} \nabla_{Y_{i+1} \wedge \cdots \wedge Y_{n} \wedge Y_1 \wedge \cdots \wedge Y_{i-1}} [X_1,\cdots, X_{n-1}, Y_i]^\nabla \\
			&\quad - \sum_{i=0}^{n-1} \sum_{j\neq i} (-1)^{(n-1)i} \nabla_{Y_{i+1} \wedge \cdots \wedge [X_1, \cdots, X_{n-1}, Y_{j}]^\nabla \wedge \cdots \wedge Y_{i-1}}Y_i \quad \text{by Eq.~\eqref{eq9}} \\
			&= [X_1,\cdots,X_{n-1},[Y_1,\cdots,Y_n]^\nabla]^\nabla - \sum_{i=1}^{n}[Y_1,\cdotp, [X_1,\cdots,X_{n-1},Y_i]^\nabla, \cdots,Y_n]^\nabla.
		\end{align*}
	\end{proof}
	
	The next lemma shows that any Filippov algebroid can be realized by a Filippov connection.
	\begin{lemma}\label{lemma2.6}
		Let $(A, [\cdotp,\cdots,\cdotp], \rho)$ be a Filippov $n$-algebroid. Then there exists a Filippov connection $\nabla$   on the underlying $n$-anchored bundle $(A,\rho)$ such that $[\cdotp,\cdots,\cdotp]=[\cdotp,\cdots,\cdotp]^{\nabla}$ (the torsion-free property).
	\end{lemma}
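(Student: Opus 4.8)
The plan is to mimic the classical $n=2$ construction of a torsion-free connection on a Lie algebroid: start from an arbitrary connection, measure the failure of the torsion-free identity~\eqref{eq9} by a tensor, and then absorb that tensor by a purely tensorial correction. First I would fix a reference connection: pick any $TM$-connection $\nabla^{TM}$ on the vector bundle $A$ and set $\nabla^0_{X_1\wedge\cdots\wedge X_{n-1}}X_n:=\nabla^{TM}_{\rho(X_1\wedge\cdots\wedge X_{n-1})}X_n$, which by the remark following Definition~\ref{def2.1} is a connection on $(A,\rho)$ and hence induces, via Eq~\eqref{eq9}, a skew-symmetric $n$-bracket $[\cdotp,\cdots,\cdotp]^{\nabla^0}$ on $\Gamma(A)$.

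Next I would introduce the \emph{torsion} $T(X_1,\dots,X_n):=[X_1,\dots,X_n]-[X_1,\dots,X_n]^{\nabla^0}$. Both brackets are $\mathbb{R}$-multilinear and skew-symmetric; moreover each is a derivation in every slot with respect to $C^\infty(M)$, with \emph{the same} inhomogeneous term: for the Filippov bracket this is Eq~\eqref{eq2} together with skew-symmetry, and for $[\cdotp,\cdots,\cdotp]^{\nabla^0}$ it follows from inspecting Eq~\eqref{eq9} (the single term $i=n$ contributes $\rho(X_1\wedge\cdots\wedge X_{n-1})(f)X_n$, all other terms being $C^\infty(M)$-linear in $X_n$ since there $X_n$ sits in the tensorial slot of $\nabla^0$), again combined with skew-symmetry. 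Consequently the anchor terms cancel in the difference, and $T$ is $C^\infty(M)$-multilinear, i.e. $T\in\Gamma(\wedge^n A^*\otimes A)$.

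I would then set $\nabla_{X_1\wedge\cdots\wedge X_{n-1}}X_n:=\nabla^0_{X_1\wedge\cdots\wedge X_{n-1}}X_n+\tfrac1n\,T(X_1,\dots,X_{n-1},X_n)$. Since the correction is $C^\infty(M)$-linear in all arguments, $\nabla$ still obeys the two conditions of Definition~\ref{def2.1}, so it is a connection on $(A,\rho)$. A short sign check then gives torsion-freeness: in $[X_1,\dots,X_n]^\nabla=[X_1,\dots,X_n]^{\nabla^0}+\tfrac1n\sum_{i=1}^n(-1)^{n+i}\,T(X_1,\dots,\widehat{X_i},\dots,X_n,X_i)$, moving $X_i$ from the last slot back to the $i$-th slot of the alternating tensor $T$ contributes the sign $(-1)^{n-i}$, which combined with $(-1)^{n+i}$ yields $1$; hence the $n$ summands add to $n\cdot\tfrac1n\,T=T$ and $[\cdotp,\cdots,\cdotp]^\nabla=[\cdotp,\cdots,\cdotp]$. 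Since $[\cdotp,\cdots,\cdotp]^\nabla$ is now the given Filippov bracket, it satisfies Eq~\eqref{eq1} and Eq~\eqref{eq3}; by Lemma~\ref{lemma2.4} the first condition of Definition~\ref{def2.2} holds, and by Lemma~\ref{lemma2.5} the Bianchi-Filippov identity~\eqref{eq12} holds, so $\nabla$ is a Filippov connection realizing $(A,[\cdotp,\cdots,\cdotp],\rho)$.

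The hard part will not be any single estimate but the combinatorial bookkeeping concentrated in the second and third steps: checking that the non-tensorial parts of the Filippov bracket and of $[\cdotp,\cdots,\cdotp]^{\nabla^0}$ agree slot-by-slot (so that $T$ is genuinely a tensor), and verifying that the normalization $\tfrac1n$ is exactly the one that survives the alternating signs of Eq~\eqref{eq9}. Once these are in hand, everything else is a direct transcription of the Lie algebroid argument, with Lemmas~\ref{lemma2.4} and~\ref{lemma2.5} doing the remaining work.
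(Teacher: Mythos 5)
Your proposal is correct and follows essentially the same route as the paper: fix a reference connection, observe that the difference $T$ between the given bracket and the induced bracket is tensorial, add the correction $\tfrac1n T$, and check that the cyclic/alternating signs in Eq~\eqref{eq9} make the $n$ copies of $\tfrac1n T$ sum to $T$, after which Lemmas~\ref{lemma2.4} and~\ref{lemma2.5} give the Filippov conditions. The only cosmetic difference is that the paper starts from an arbitrary connection $\nabla^\circ$ rather than specifically a pullback of a $TM$-connection, which changes nothing.
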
	
	\begin{proof}\renewcommand{\qedsymbol}{}
		Given a connection $\nabla^\circ$ on $(A,\rho)$,  we are able to obtain an $\mathbb{R}$-multilinear operation $K(\cdotp,\cdotp \cdotp \cdotp,\cdotp)$ on $\Gamma(A)$ by
		\begin{eqnarray*}
			K(X_1,\cdots,X_n) &:=&[X_1,\cdots,X_n]- [X_1,\cdots,X_n]^{\nabla^\circ}.
		\end{eqnarray*}
		Using axioms of Filippov algebroids, it is easy to see that $K(\cdotp,\cdots,\cdotp)$ is indeed $C^\infty(M)$-multilinear. 		
		Then we define a new connection $\nabla$ on $(A,\rho)$ by
		\begin{eqnarray*}
			\nabla_{X_1\wedge \cdots \wedge X_{n-1}}X_n &:=& \frac{1}{n}K(X_1,\cdots,X_n)+\nabla^\circ_{X_1\wedge \cdots \wedge X_{n-1}}X_n.
		\end{eqnarray*}
		It remains to check the desired equality:
		\begin{eqnarray*}
			[X_1,\cdots,X_n ]^{\nabla}
			&=&\nabla_{X_1\wedge \cdots \wedge X_{n-1}}X_n + \sum_{i=1}^{n-1}(-1)^{(n-1)i} \nabla_{X_{i+1}\wedge \cdots \wedge X_{n}\wedge X_1\wedge \cdots \wedge X_{i-1}}X_i \\
			&=&K(X_1,\cdots,X_n)+\nabla^\circ_{X_1\wedge \cdots \wedge X_{n-1}}X_n + \sum_{i=1}^{n-1}(-1)^{(n-1)i}\nabla^\circ_{X_{i+1}\wedge \cdots \wedge X_{n}\wedge X_1\wedge \cdots \wedge X_{i-1}}X_i  \\
			&=&[X_1,\cdots,X_n].
		\end{eqnarray*}
	\end{proof}	
	
	The following examples illustrate three Filippov connections on the trivial tangent bundle $T\mathbb{R}^m$ of $\mathbb{R}^m$ for $m  \geqslant n\geqslant 2$.
	\begin{example}
		Consider the trivial $n$-anchored vector bundle $(T\mathbb{R}^m, \rho =0)$. Suppose that the vector space $\mathbb{R}^m$ is endowed with a Filippov $n$-algebra structure whose structure constants are $\{c^j_{i_1\cdots i_{n}}\}$ with respect to the standard basis of $\mathbb{R}^m$.
		Given a smooth function $g\in C^\infty(\mathbb{R}^m)$ and a set of constants $\{a^j_{i_1\cdots i_{n-1};i_n}\}$ satisfying the equality:
		\begin{equation}\label{eq19}
			a^j_{i_1\cdots i_{n-1};i_n}+\sum_{k=1}^{n-1}(-1)^{(n-1)k}a^j_{i_{k+1}\cdots i_n i_1\cdots i_{k-1};i_k}=c^j_{i_1\cdots i_{n}},
		\end{equation}
		we are able to obtain a connection on $(T\mathbb{R}^m,\rho=0)$ generated by the only one nontrivial relation:
		\begin{equation*}\label{eq 20}
			\nabla_{\frac{\partial}{\partial x_{i_1}}\wedge\cdots\wedge\frac{\partial}{\partial x_{i_{n-1}}}}\frac{\partial}{\partial x_{i_n}}:=g\sum_{j=1}^{m}a^j_{i_1\cdots i_{n-1};i_n}\frac{\partial}{\partial x_j}.
		\end{equation*}
		It follows from the recipe in Eqs \eqref{eq9} and \eqref{eq19} that
		\begin{equation*}
			\left[\frac{\partial}{\partial x_{i_1}},\cdots,\frac{\partial}{\partial x_{i_n}}\right]^{\nabla}=g\sum_{i=1}^{m}c^j_{i_1\cdots i_{n}}\frac{\partial}{\partial x_j}.
		\end{equation*}
		So, what we recover is the Filippov structure on $T\mathbb{R}^m$ as in Example \ref{example1.3}. Hence, $\nabla$ is indeed a Filippov connection.
	\end{example}
	
	\begin{example}
		Consider the $n$-anchor map $\rho$ on the tangent bundle $T\mathbb{R}^m$ defined by the tensor field  $dx_1\wedge \cdotp \cdotp \cdotp \wedge dx_{n-1} \otimes \frac{\partial}{\partial x_1}$, where $x_1,\cdots,x_{n-1},x_n,\cdots,x_m$ are coordinate functions of $\mathbb{R}^m$.
		It is obvious that the (nontrivial) connection on $(T\mathbb{R}^m,\rho)$
		generated by the trivial relation:
		\[
		\nabla_{\frac{\partial}{\partial x_{i_1}}\wedge\cdots\wedge\frac{\partial}{\partial x_{i_{n-1}}}}\frac{\partial}{\partial x_{i_n}}:=0,
		\]
		produces  a nontrivial $n$-bracket which is compatible with $\rho$.
		Indeed, what we recover is the Filippov structure on $T\mathbb{R}^m$ as in Example \ref{example1.4}, and the said   connection $\nabla $ is a Filippov connection.
	\end{example}
	\begin{example}
		Continue to work with the anchored bundle $(A,\rho)$ as in the previous example. We consider a different connection with the only nontrivial generating relations:
		\begin{equation*}
			\nabla_{Z}\frac{\partial}{\partial x_{k}}=	\left\{
			\begin{aligned}
				&(-1)^\sigma\frac{\partial}{\partial x_{k}}, \mbox{ if }Z\quad  = \frac{\partial}{\partial x_{\sigma_1}}\wedge\cdots\wedge\frac{\partial}{\partial x_{\sigma_{n-1}}}, \\
				& 0,\mbox{ otherwise,}
			\end{aligned}
			\right.
		\end{equation*}
		where $\sigma$ is a permutation $\{1,\cdots,n-1\}$,   for all $x_k\in \{x_1,\cdots,x_m\}$. Then, the associated $n$-bracket is given by
		\begin{equation*}
			\left[\frac{\partial}{\partial x_{\sigma_1}},\cdots,\frac{\partial}{\partial x_{\sigma_{n-1}}},\frac{\partial}{\partial x_{k}}\right]^{\nabla} = \left\{
			\begin{aligned}
				&(-1)^\sigma\frac{\partial}{\partial x_{k}}, \quad \text{if}\quad k>n-1,
				\\
				&0,\text{otherwise},
			\end{aligned}
			\right.
		\end{equation*}
		where $\sigma$ is a permutation $\{1,\cdots,n-1\}$.
		By subtle analysis, one can find that the associated curvature $R^{\nabla}$ is just zero. Hence $\nabla$ is truly a Filippov connection and the above bracket defines a Filippov algebroid structure on $(A,\rho)$.
	\end{example}
	
	\subsection{Construction of Filippov connections}\label{Sec:Dxipair}
	Let $A\to M$ be a vector bundle. Consider the bundle $\CDO(A)$ of covariant differential
	operators (cf. \cite{Lie}*{III}, \cite{pseudoalgebras}, see also \cite{Differential}, where the notation $\mathcal{D}(A)$ is used instead of $\CDO(A)$). An element $D$ of $\Gamma(\CDO(A))$, called a covariant differential operator, is  an $\R$-linear operator $\Gamma(A)\rightarrow \Gamma(A)$ together with a vector field $ \hat{D} \in \Gamma(TM)$, called the symbol of $D$, satisfying
	\begin{eqnarray*}
		D(fX)=fD(X)+\hat{D}(f)\cdot X, \qquad\forall X\in\Gamma(A), f\in\CinfM.
	\end{eqnarray*}
	The operator $D$ can be first extended by the Leibniz rule to an operator $D \colon \Gamma(\wedge^{n-1} A) \to \Gamma(\wedge^{n-1}A)$. By taking dual we obtain an operator $D \colon \Gamma(\wedge ^{n-1} A^*)\rightarrow \Gamma(\wedge ^{n-1} A^*)$ defined by
	\begin{eqnarray}\label{dual operator}
		\langle X_1\wedge\cdotp\cdotp\cdotp\wedge X_{n-1}|D(\bar{\eta})\rangle
		=\hat{D}\langle X_1\wedge\cdotp\cdotp\cdotp\wedge X_{n-1}|\bar{\eta}\rangle-\sum_{i=1}^{n-1}\langle X_1\wedge\cdotp\cdotp\cdotp\wedge D(X_i)\wedge\cdotp\cdotp\cdotp\wedge X_{n-1}|\bar{\eta}\rangle,
	\end{eqnarray}
	for all $X_1, \cdots, X_{n-1}\in \Gamma(A)$ and $\bar{\eta} \in\Gamma(\wedge^{n-1}A^*)$.
	
	Given a pair $(D,\bar{\xi})$, where $D\in\Gamma(\CDO(A))$  and $\bar{\xi}\in \Gamma(\wedge^{n-1}A^*)$, one is able to construct a map
	\begin{eqnarray*}
		\rho^{(D,\bar{\xi})} \colon  \Gamma(\wedge^{n-1}A) &\to& \Gamma(TM),\\
		X_1\wedge\cdotp\cdotp\cdotp\wedge X_{n-1} &\mapsto&  \langle X_1\wedge\cdotp\cdotp\cdotp\wedge X_{n-1}|\bar{\xi} \rangle \hat{D}.
	\end{eqnarray*}
	It is clear that $\rho^{(D,\bar{\xi})}$ makes $A$ an $n$-anchored bundle, and the rank of the image of $\rho^{(D,\bar{\xi})}$ does not exceed~1.
	
	Define a connection on the $n$-anchored bundle $(A,\rho^{(D,\bar{\xi})})$ by
	\begin{equation}\label{Eqt:pairconnection}
		\nabla^{(D,\bar{\xi})}_{X_1\wedge\cdotp\cdotp\cdotp\wedge X_{n-1}}X_n : =\langle X_1\wedge \cdots \wedge X_{n-1}|\bar{\xi}\rangle D(X_n).
	\end{equation}
	\begin{proposition}
		If the pair $(D,\bar{\xi})$ is subject to the relation
		\begin{equation}\label{EigenCondition}
			D(\bar{\xi})=g\bar{\xi} ,  \quad \mbox{ for some }  g\in C^\infty(M),
		\end{equation}
		then $\nabla^{(D,\bar{\xi})}$ defined as in~\eqref{Eqt:pairconnection} is a Filippov connection on the $n$-anchored bundle $(A,\rho^{(D,\bar{\xi})})$.
	\end{proposition}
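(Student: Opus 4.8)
The plan is to reduce the statement, via Lemmas~\ref{lemma2.4} and~\ref{lemma2.5}, to two properties of the $n$-bracket $[\cdot,\cdots,\cdot]^{\nabla}$ induced by $\nabla:=\nabla^{(D,\bar\xi)}$: that $\rho:=\rho^{(D,\bar\xi)}$ intertwines it with the Lie bracket on $\Gamma(TM)$ in the sense of~\eqref{eq1}, and that it obeys the generalized Jacobi identity~\eqref{eq3}. One checks directly that the $n$-bracket attached to any connection is skew-symmetric and satisfies the Leibniz rule~\eqref{eq2}; and by the two lemmas the identities~\eqref{eq1} and~\eqref{eq3} for $[\cdot,\cdots,\cdot]^{\nabla}$ are equivalent, respectively, to conditions (1) and (2) of Definition~\ref{def2.2}. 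Substituting~\eqref{Eqt:pairconnection} into~\eqref{eq9} gives the explicit formula
\[
	[X_1,\cdots,X_n]^{\nabla}=\sum_{i=1}^{n}(-1)^{n+i}\,\langle X_1\wedge\cdots\wedge\widehat{X_i}\wedge\cdots\wedge X_n\,|\,\bar\xi\rangle\,D(X_i),
\]
so that $[X_1,\cdots,X_{n-1},Y]^{\nabla}=\phi_X D(Y)$ plus a sum of terms in which $D$ acts on some $X_j$, where I write $\phi_X:=\langle X_1\wedge\cdots\wedge X_{n-1}|\bar\xi\rangle$. The only analytic input below is that combining the dual-operator relation~\eqref{dual operator} with the hypothesis~\eqref{EigenCondition} yields, for all $Z_1,\cdots,Z_{n-1}\in\Gamma(A)$,
\[
	\hat D\langle Z_1\wedge\cdots\wedge Z_{n-1}|\bar\xi\rangle=\sum_{k=1}^{n-1}\langle Z_1\wedge\cdots\wedge D(Z_k)\wedge\cdots\wedge Z_{n-1}|\bar\xi\rangle+g\,\langle Z_1\wedge\cdots\wedge Z_{n-1}|\bar\xi\rangle;
\]
everything else reduces to the elementary shuffle and contraction identities for the pairing $\langle-|\bar\xi\rangle$.

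For the intertwining identity~\eqref{eq1}, since $\rho(W)=\langle W|\bar\xi\rangle\hat D$ for $W\in\Gamma(\wedge^{n-1}A)$, both sides are $C^\infty(M)$-multiples of $\hat D$, so it suffices to compare scalar coefficients; the left one is $\phi_X\hat D(\phi_Y)-\phi_Y\hat D(\phi_X)$. Feeding $[X_1,\cdots,X_{n-1},Y_i]^{\nabla}$ into the right-hand side of~\eqref{eq1}, the part in which $D$ falls on $Y_i$ contributes $\phi_X\sum_i\langle Y_1\wedge\cdots\wedge D(Y_i)\wedge\cdots\wedge Y_{n-1}|\bar\xi\rangle$, which the second display above turns into $\phi_X(\hat D(\phi_Y)-g\phi_Y)$; the part in which $D$ falls on some $X_j$ reassembles, via the Grassmann contraction identities, into $-\phi_Y(\hat D(\phi_X)-g\phi_X)$. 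The two $g$-terms cancel and the right coefficient equals $\phi_X\hat D(\phi_Y)-\phi_Y\hat D(\phi_X)$, as needed.

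For the generalized Jacobi identity~\eqref{eq3}, I would substitute the explicit bracket into $[X_1,\cdots,X_{n-1},[Y_1,\cdots,Y_n]^{\nabla}]^{\nabla}$ and into $\sum_{i}[Y_1,\cdots,[X_1,\cdots,X_{n-1},Y_i]^{\nabla},\cdots,Y_n]^{\nabla}$ and compare. After expansion every term has one of three shapes: (a) a pairing times $D(D(\cdot))$ on a single section; (b) a pairing times $\hat D$ of a pairing times $D(\cdot)$; (c) a product of two pairings times $D(\cdot)$. The shape-(a) terms match one-to-one between the two sides. In the shape-(b) terms one applies the second display above to each $\hat D\langle-|\bar\xi\rangle$, replacing it by pairing-only terms plus a multiple of $g$; the $g$-multiples cancel between the two sides, and the newly produced pairing-only terms are absorbed into the shape-(c) terms. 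What remains is a purely algebraic identity among products of $\langle-|\bar\xi\rangle$, which holds by the Grassmann contraction identities. This completes the check; together with the previous paragraph it shows $\nabla^{(D,\bar\xi)}$ is a Filippov connection---equivalently, by Theorem~\ref{th2.3}, that $(A,\rho^{(D,\bar\xi)},[\cdot,\cdots,\cdot]^{\nabla})$ is a Filippov $n$-algebroid.

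The one real difficulty is the Jacobi check, and it is bookkeeping rather than conceptual: one must keep track of the signs (such as the $(-1)^{(n-1)i}$ occurring in~\eqref{eq9} and~\eqref{eq12}), of the transposition signs incurred when normalizing the order of wedge factors inside $\langle-|\bar\xi\rangle$, and of the signs in the exterior-algebra identities. The organizing observation is that the hypothesis~\eqref{EigenCondition} is used only to trade each $\hat D\langle-|\bar\xi\rangle$ for pairing-only terms, with the residual multiples of $g$ cancelling in pairs, after which nothing but the identities for $\langle-|\bar\xi\rangle$ is involved. In practice I would first carry out the computation for $n=3$, where the three shapes contain only a handful of terms each, and then observe that the argument for general $n$ is formally identical.
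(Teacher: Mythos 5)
Your reduction through Lemmas~\ref{lemma2.4} and~\ref{lemma2.5} is legitimate and, in substance, reproduces the paper's own proof, which verifies conditions (1) and (2) of Definition~\ref{def2.2} by expanding $R^{\nabla}$ directly: the two routes generate exactly the same terms, and your use of \eqref{EigenCondition} to trade each $\hat{D}\langle Z_1\wedge\cdots\wedge Z_{n-1}|\bar{\xi}\rangle$ for $\sum_{k}\langle Z_1\wedge\cdots\wedge D(Z_k)\wedge\cdots\wedge Z_{n-1}|\bar{\xi}\rangle+g\langle Z_1\wedge\cdots\wedge Z_{n-1}|\bar{\xi}\rangle$ is precisely where the hypothesis enters in the paper as well.

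The gap is the final, unproved appeal to ``Grassmann contraction identities.'' After the $g$-terms cancel, your intertwining check requires, for each fixed $j$, that
\[
\sum_{i=1}^{n-1}\langle X_1\wedge\cdots\wedge\widehat{X_j}\wedge\cdots\wedge X_{n-1}\wedge Y_i|\bar{\xi}\rangle\,\langle Y_1\wedge\cdots\wedge Y_{i-1}\wedge D(X_j)\wedge Y_{i+1}\wedge\cdots\wedge Y_{n-1}|\bar{\xi}\rangle
\]
collapse, up to sign, to the single product $\langle Y_1\wedge\cdots\wedge Y_{n-1}|\bar{\xi}\rangle\,\langle X_1\wedge\cdots\wedge D(X_j)\wedge\cdots\wedge X_{n-1}|\bar{\xi}\rangle$. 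This is a Pl\"{u}cker-type relation, not a formal consequence of multilinearity: for $n=3$ the difference of the two sides is a contraction of $\bar{\xi}\wedge\bar{\xi}$, so the identity holds exactly when $\bar{\xi}$ is pointwise decomposable, and it is not implied by \eqref{EigenCondition}. Concretely, take $n=3$, $A=\R\times\R^{4}\to\R$ with frame $e_1,\dots,e_4$ and dual frame $e^1,\dots,e^4$, set $\bar{\xi}=e^{1}\wedge e^{2}+e^{3}\wedge e^{4}$, and let $D$ have symbol $\hat{D}=\partial_t$ with $D(e_1)=e_3$, $D(e_4)=-e_2$, $D(e_2)=D(e_3)=0$. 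One checks $D(\bar{\xi})=0=0\cdot\bar{\xi}$, yet $[\rho(e_1\wedge e_2),\rho(e_1\wedge e_4)]_{TM}=0$ while $\rho([e_1,e_2,e_1]^{\nabla}\wedge e_4)+\rho(e_1\wedge[e_1,e_2,e_4]^{\nabla})=\rho(e_1\wedge(-e_2))=-\partial_t$, so \eqref{eq1} fails and, by Lemma~\ref{lemma2.4}, $R^{\nabla}$ is not tensorial in its last argument. (The same identity is silently absorbed into one of the displayed equalities in the paper's proof, so the issue is shared rather than introduced by you.) The argument closes once one adds the hypothesis that $\bar{\xi}$ is decomposable --- automatic when $\operatorname{rank}A\leqslant n$, the setting of Section~\ref{Sec:Nambupart} --- in which case you should state and verify the determinantal identity for $\bar{\xi}=\alpha_1\wedge\cdots\wedge\alpha_{n-1}$ explicitly instead of citing it.
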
	
	\begin{proof}\renewcommand{\qedsymbol}{}
		We denote $\nabla^{(D,\bar{\xi})}$ by $\nabla$ for simplicity below. It suffices to check the associated curvature $R^\nabla$ is $C^\infty(M)$-linear with respect to the last argument and satisfies the Bianchi-Filippov identity~\eqref{eq12}. In fact, we have
		\begin{align*}
			&\quad R^\nabla(X_1,\cdots,X_{n-1},Y_1\wedge\cdots \wedge Y_{n-1})(fY_n) \qquad \text{by Eq.~\eqref{eq10}}\\
			&= X^{\nabla}_{1\cdots n-1}\nabla_{Y_1\wedge\cdots \wedge Y_{n-1}}(fY_n) -\nabla_{Y_1\wedge\cdots \wedge Y_{n-1}}X^{\nabla}_{1\cdots n-1}(fY_n)
			-\nabla_{ {X_{1\cdots n-1}^\nabla}(Y_1\wedge\cdots \wedge Y_{n-1})}(fY_n) \; \text{by Eqs.~\eqref{eq9}, \eqref{dual operator}, \eqref{Eqt:pairconnection}} \\
			&= fR^\nabla(X_1,\cdots,X_{n-1}, Y_1 \wedge\cdots\wedge\wedge Y_{n-1})(Y_n)+(\langle X_1 \wedge \cdots \wedge X_{n-1} | \bar{\xi} \rangle \\
			&\quad  \langle Y_1\wedge\cdots\wedge Y_{n-1} | D(\bar{\xi})  \rangle -\langle Y_1\wedge\cdots\wedge Y_{n-1}|\bar{\xi} \rangle \langle X_1\wedge\cdots\wedge X_{n-1}|D(\bar{\xi})  \rangle)\hat{D}(f)Y_n \;\;\text{by Eq.~\eqref{EigenCondition}} \\
			&= f R^\nabla(X_1,\cdots,X_{n-1},Y_1\wedge\cdots \wedge Y_{n-1})(Y_n),
		\end{align*}
		and 	
		\begin{align*}
			&\quad \sum_{i=0}^{n-1}(-1)^{(n-1)i} R^\nabla(X_1,\cdots,X_{n-1},Y_{i+1} \wedge \cdots \wedge Y_{n} \wedge Y_1 \wedge \cdots \wedge Y_{i-1})Y_i     \qquad \text{by Eq.~\eqref{eq10}}      \\
			&= \sum_{i=0}^{n-1}(-1)^{(n-1)i}  [X^{\nabla}_{1\cdots n-1}, \nabla_{Y_{i+1} \wedge \cdots \wedge Y_{n} \wedge Y_1 \wedge \cdots \wedge Y_{i-1}}](Y_i) \\
			&\quad  -\sum_{i=0}^{n-1}(-1)^{(n-1)i} \nabla_{ {X_{1\cdots n-1}^\nabla}(Y_{i+1} \wedge \cdots \wedge Y_{n} \wedge Y_1 \wedge \cdots \wedge Y_{i-1})}Y_i  \quad \text{by Eqs.~\eqref{eq9} and \eqref{Eqt:pairconnection} }\\
			&=\sum_{i=0}^{n-1}(-1)^{(n-1)i}\langle Y_{i+1} \wedge \cdots \wedge Y_{n} \wedge Y_1 \wedge \cdots \wedge Y_{i-1}|\bar{\xi}  \rangle X^{\nabla}_{1\cdots n-1}(D(Y_i)) \\
			&\quad  -\sum_{i=0}^{n-1}(-1)^{(n-1)i}\langle Y_{i+1} \wedge \cdots \wedge Y_{n} \wedge Y_1 \wedge \cdots \wedge Y_{i-1}|\bar{\xi}  \rangle D(X^{\nabla}_{1\cdots n-1}(Y_i)) \\
			&\quad  +\sum_{i=0}^{n-1}(-1)^{(n-1)i}X^{\nabla}_{1\cdots n-1}(\langle Y_{i+1} \wedge \cdots \wedge Y_{n} \wedge Y_1 \wedge \cdots \wedge Y_{i-1}|\bar{\xi} \rangle )D(Y_i) \\
			&\quad  -\sum_{i=0}^{n-1}(-1)^{(n-1)i}\sum_{j\neq i}\langle Y_{i+1} \wedge \cdots \wedge {X_{1\cdots n-1}^\nabla}(Y_{j}) \wedge \cdots \wedge Y_{i-1}|\bar{\xi}\rangle D(Y_i) \;\; \text{by Eqs.\eqref{eq9},\eqref{dual operator} and \eqref{Eqt:pairconnection}}\\
			&= \sum_{i=0}^{n-1}(-1)^{(n-1)i} \langle X_1\wedge\cdots\wedge X_{n-1}|\bar{\xi} \rangle \langle Y_{i+1} \wedge \cdots \wedge Y_{n} \wedge Y_1 \wedge \cdots \wedge Y_{i-1}|D(\bar{\xi}) \rangle\\
			&\quad  - \sum_{i=0}^{n-1}(-1)^{(n-1)i} \langle  Y_{i+1} \wedge \cdots \wedge Y_{n} \wedge Y_1 \wedge \cdots \wedge Y_{i-1}|\bar{\xi} \rangle \langle X_1\wedge\cdots\wedge X_{n-1}|D(\bar{\xi}) \rangle  \;\; \text{by Eq.~\eqref{EigenCondition}}\\
			& = 0.
		\end{align*}
		Hence, $\nabla^{(D,\bar{\xi})}$ defined as in~\eqref{Eqt:pairconnection} is indeed a Filippov connection.
	\end{proof}	
	As a consequence of Theorem \ref{th2.3}, a pair $(D,\bar{\xi})$ subject to Condition \eqref{EigenCondition} produces a Filippov $n$-algebroid structure on $A$. Its $n$-bracket reads:
	\begin{eqnarray*}
		[X_1,\cdots,X_n]^{\nabla^{(D,\bar{\xi})}}
		&=& \langle X_1\wedge\cdotp\cdotp\cdotp\wedge X_{n-1}|\bar{\xi}\rangle D(X_n) +(-1)^{n-1}\langle X_2\wedge \cdotp \cdotp \cdotp \wedge X_{n}|\bar{\xi}\rangle D(X_1)+ \cdots \\
		&&\ +\langle X_{n-1}\wedge X_n\wedge X_1\wedge \cdotp \cdotp \cdotp \wedge X_{n-3}|\bar{\xi}\rangle D(X_{n-2})     +(-1)^{n-1}\langle X_{n}\wedge X_1\wedge \cdotp \cdotp \cdotp \wedge X_{n-2}|\bar{\xi}\rangle D(X_{n-1}).
	\end{eqnarray*}
	
	\section{A construction of linear Nambu-Poisson structures}\label{Sec:Nambupart}

	In this section, we unravel under certain conditions a relation between linear Nambu-Poisson structures and Filippov connections.

	\subsection{From Filippov algebroids to linear Nambu-Poisson structures}
	\begin{definition}\cite{Tak, ref15}
		A Nambu-Poisson structure of order $n$ on a smooth manifold $P$ is  an $\mathbb{R}$-multilinear and skew-symmetric $n$-bracket on the smooth function space $C^\infty(P)$:
		\[
		\{\cdotp,\cdotp\cdotp\cdotp,\cdotp\}\colon \underbrace{C^\infty(P)\times\cdotp\cdotp\cdotp\times C^\infty(P)}_{n-\mbox{copies}}\to C^\infty(P),
		\]
		satisfying the following two conditions:
		\begin{enumerate}
			\item[(1)] The $n$-bracket is a derivation with respect to $C^\infty(P)$-multiplications:			
			\[
			\{f_1,\cdots,f_{n-1},g_1g_2\}=g_1\{f_1,\cdots,f_{n-1},g_2\}+\{f_1,\cdots,f_{n-1},g_1\}g_2;
			\]
			\item[(2)] The (generalized) Jacobi identity (also known as the fundamental identity)
			\[ \{f_1,\cdots,f_{n-1},\{g_1,\cdots,g_n\}\}=\sum_{i=1}^{n}\{g_1,\cdots,\{f_1,\cdots,f_{n-1},g_i\},\cdots,g_{n}\},
			\]
			holds for all $f_{i}$ and $g_{j} \in C^\infty(P)$.
		\end{enumerate}
		The pair $(P, \{\cdotp,\cdotp\cdotp\cdotp,\cdotp\})$ is called a Nambu-Poisson manifold.
	\end{definition}

	Alternatively, one could express the said bracket via an $n$-vector field $\pi$ on $P$ such that
	\begin{equation}\label{Eqt:piandnbracket}
		\{f_1,\cdots, f_n\}=\pi(df_1,\cdots, df_n),\quad \forall f_1,\cdots, f_n\in C^\infty(P).
	\end{equation}	
	
	Given a smooth vector bundle  $p\colon A\to M$, the section space $\Gamma(A )$   are identified as the space $C^\infty_{\mathrm{lin}}(A^*)$ of fiberwise linear functions on $A^*$, the dual vector bundle of $A$; while elements in $p^\ast(C^\infty(M))$ are called basic functions on $A^*$.
	To fix the notations, 	for any section $X \in \Gamma(A)$, let $\phi_X \in C^\infty_{\mathrm{lin}}(A^*)$ be the corresponding linear function on $A^\ast$.
	
	\begin{definition}\cite{BBDM}
		A Nambu-Poisson structure of order $n$ on the vector bundle $ A^\ast \to M$ is said to be linear, if it satisfies the following three conditions:
		\begin{enumerate}
			\item[(1)] The bracket of $n$ linear functions is again a linear function;
			\item[(2)] The bracket of $(n-1)$ linear functions and a basic function is a basic function;
			\item[(3)] The bracket of $n$ functions is zero if there are more than one basic functions among the arguments.
		\end{enumerate}
	\end{definition}
	In fact, the second and the third condition in the above definition can be derived from the first condition.
	
	Note that any Poisson manifold is a Nambu-Poisson manifold of order $2$.
	A well-known fact is the following: A Lie algebroid $A$ over $M$ gives rise to a linear Poisson manifold $A^*$, and vice versa.   It is pointed out in \cite{BBDM}*{Theorem 4.4} that a linear Nambu-Poisson structure of order $n$ on $A^*$ corresponds to a Filippov $n$-algebroid structure on $A$ (see also~\cite{BL}). However, the reverse process is generally not valid for the cases of $n\geqslant 3$, mainly because the condition of a Nambu-Poisson structure is very strong (cf. \cite{ref13}). Nevertheless, in this paper, we will require that $A$ be a rank $n$ vector bundle and establish the one-to-one correspondence between Filippov $n$-algebroid structures on $A$ and linear Nambu-Poisson structures on $A^*$. In specific, under the said condition, our main theorem below serves as a complement to \cite{BBDM}*{Theorem 4.4}.
	\begin{theorem}\label{prop3.3}
		Let $(A,\rho,[\cdotp,\cdots,\cdotp])$ be a Filippov $n$-algebroid over a  smooth manifold $M$, where $A  \to M$ is a vector bundle of rank $n\geqslant 3$. Then there exists a unique linear Nambu-Poisson structure on the dual bundle $A^*\to M$ such that for all sections $X_1,\cdots,X_n\in \Gamma(A)$,
		\begin{equation}\label{Nambu condition-1}
			\{\phi_{X_1},\cdots, \phi_{X_n}\}=\phi_{[X_1,\cdots, X_n]}.
		\end{equation}
	\end{theorem}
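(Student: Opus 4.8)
A Nambu--Poisson bracket on $A^*$ is a derivation in each of its arguments, so it is determined by its values on any family of functions whose differentials span $T^*A^*$ at every point; locally one may take the fibrewise-linear functions $\phi_X$, $X\in\Gamma(A)$, together with the basic functions $p^*f$, $f\in\CinfM$. I would first record that \eqref{Nambu condition-1} fixes the bracket of $n$ linear functions; that applying the derivation rule to $\phi_{fX_n}=(p^*f)\phi_{X_n}$ and comparing with the Leibniz rule \eqref{eq2} of the Filippov bracket forces
\[
\{\phi_{X_1},\cdots,\phi_{X_{n-1}},p^*f\}=p^*\!\big(\rho(X_1\wedge\cdots\wedge X_{n-1})(f)\big),
\]
and that the same device, using that $\rho$ is a bundle map, forces the bracket to vanish as soon as two of its arguments are basic. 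These prescriptions are $\mathbb R$-multilinear, skew-symmetric and frame-independent, hence they assemble (via \eqref{Eqt:piandnbracket}) into a well-defined $n$-vector field $\pi$ on $A^*$ realizing the prescribed bracket, which already proves uniqueness; and conditions (1)--(3) of a \emph{linear} Nambu--Poisson structure then hold by construction. In a local frame $e_1,\dots,e_n$ of $A$ with dual fibre coordinates $\xi_1,\dots,\xi_n$ the tensor reads
\[
\pi \;=\; P\,\partial_{\xi_1}\wedge\cdots\wedge\partial_{\xi_n}\;+\;\sum_{k=1}^{n}(\pm1)\,r_k\,V\wedge\partial_{\xi_1}\wedge\cdots\widehat{\partial_{\xi_k}}\cdots\wedge\partial_{\xi_n},
\]
where $P=\phi_{[e_1,\dots,e_n]}$ is linear, $r_k\in\CinfM$, and $V$ is a \emph{single} local vector field on $M$; that all the horizontal legs are proportional to one $V$ is exactly Proposition~\ref{propo1.5}.

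\textbf{The fundamental identity.} It remains to check that $\{\cdot,\dots,\cdot\}=\pi(df_1,\dots,df_n)$ satisfies the fundamental identity, and this is where the hypothesis $\operatorname{rank}(A)=n$ is essential: for $n\geqslant 3$ the fundamental identity is not tensorial in all of its slots, so, unlike the $n=2$ Jacobi identity, it cannot be reduced to a check on generators directly. Instead I would first prove that $\pi$ is \emph{decomposable at every point} of $A^*$: where $P(\theta)\neq 0$ one absorbs the mixed terms of $\pi$ into the purely vertical one — possible precisely because their horizontal legs all point along $V$ — writing $\pi(\theta)=P(\theta)\bigwedge_{k=1}^{n}\big(\partial_{\xi_k}+a_kV\big)$ for suitable scalars $a_k$; where $P(\theta)=0$, $\pi(\theta)$ is one horizontal vector wedged with an $(n-1)$-vector lying in the $n$-dimensional vertical space, hence automatically decomposable. (The rank hypothesis is indispensable here: it makes the vertical part of $\pi$ of top fibre degree, and makes vertical $(n-1)$-vectors decomposable.) Once $\pi$ is decomposable, the structure theory of Nambu--Poisson tensors for $n\geqslant 3$ (\cite{Tak, ref15, ref13}) reduces the fundamental identity to its coordinate form, i.e.\ to the identity evaluated on $\xi_1,\dots,\xi_n$ and the $p^*x^\mu$. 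Expanding that with the explicit $\pi$ above and translating through $\phi_{[X_1,\dots,X_n]}=\{\phi_{X_1},\dots,\phi_{X_n}\}$ and the anchor formula, the all-linear case is the generalized Jacobi identity \eqref{eq3}, the case with one basic ``outer'' argument is the anchor compatibility \eqref{eq1}, and the remaining cases follow from \eqref{eq1} together with \eqref{eq2}; all of these hold because $(A,\rho,[\cdot,\dots,\cdot])$ is a Filippov $n$-algebroid.

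\textbf{Main obstacle.} The genuinely delicate point is the step just described — passing from the coordinate form of the fundamental identity to the identity in full — which is false for general (non-decomposable) $n$-vector fields when $n\geqslant 3$, and is legitimate here only because $\operatorname{rank}(A)=n$ and Proposition~\ref{propo1.5} together force $\pi$ to be decomposable; this is the conceptual heart of why the converse to \cite{BBDM}*{Theorem 4.4} holds under the rank hypothesis though not in general. The rest is bookkeeping: tracking the signs $(\pm1)$ in the local form of $\pi$ and matching the cases of the coordinate fundamental identity against \eqref{eq1}--\eqref{eq3}.
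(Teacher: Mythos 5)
Your overall architecture agrees with the paper's Step 1: uniqueness via generators, the local $n$-vector field $\pi$ with all horizontal legs along a single vector field by Proposition~\ref{propo1.5}, and linearity by construction. Your pointwise decomposability argument is correct, is indeed where $\operatorname{rank}A=n$ enters, and corresponds to the algebraic condition \eqref{NP-1} of the Dufour--Zung criterion (Proposition~\ref{Omega propo}), which is the route the paper actually takes for the fundamental identity. Your reduction ``decomposability $+$ fundamental identity on the coordinate functions $\phi_{X_i},p^*x^\mu$ $\Rightarrow$ fundamental identity in full'' can be made rigorous --- the failure of tensoriality of the identity in its outer slots is measured by expressions of the form $\iota_{df_1\wedge\gamma}\pi\wedge\iota_{dg}\pi+\iota_{dg\wedge\gamma}\pi\wedge\iota_{df_1}\pi$, which vanish pointwise once $\pi$ is decomposable --- although you assert it by a loose citation rather than proving it.

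The genuine gap is the closing claim that ``the remaining cases follow from \eqref{eq1} together with \eqref{eq2}.'' They do not follow from the Filippov axioms alone. Take the outer arguments to be $\phi_{X_1},\dots,\phi_{X_{n-2}},p^*h$ and the inner arguments all linear; the coordinate fundamental identity then demands (up to an overall sign convention)
\begin{equation*}
\rho\bigl(X_1\wedge\cdots\wedge X_{n-2}\wedge[Y_1,\cdots,Y_n]\bigr)(h)
=\sum_{i=1}^{n}(-1)^{n-i}\,\rho\bigl(Y_1\wedge\cdots\wedge\widehat{Y_i}\wedge\cdots\wedge Y_n\bigr)\Bigl(\rho\bigl(X_1\wedge\cdots\wedge X_{n-2}\wedge Y_i\bigr)(h)\Bigr).
\end{equation*}
This is not one of \eqref{eq1}--\eqref{eq3}: the right-hand side is a priori a second-order operator in $h$ (its symbol is a Pl\"ucker-type quadratic expression in the $(n-1)$-form defining $\rho$, which vanishes only because of the rank hypothesis), and even after the symbol cancels, the surviving first-order part requires nontrivial relations among the structure functions $c^k$ and $f_{1\cdots\widehat{l}\cdots n}$. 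This is exactly what the paper's Lemma~\ref{Lem:structureconditions} supplies: using the isomorphism $\Omega^\sharp\colon A^*\to\wedge^{n-1}A$ (available only because $\operatorname{rank}A=n$) it chooses a frame in which $n-1$ of the basis $(n-1)$-vectors lie in $\ker\rho$, and then extracts from \eqref{eq1} the relations \eqref{volume-1}--\eqref{volume-2}, which are precisely what make the differential condition \eqref{NP-2} --- equivalently, your ``remaining cases'' --- close up. Your proposal has no counterpart to this lemma, and no amount of bookkeeping with \eqref{eq1}--\eqref{eq2} applied termwise to generators will produce these relations; you would need to add an analogous adapted-frame argument (or verify \eqref{NP-2} directly, as the paper does) to complete the proof.
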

	
	Note that, if Eq \eqref{Nambu condition-1} holds, then it is easy to deduce that the linear    Nambu-Poisson structure on $A^*$ and the anchor  map $\rho$ are also related:
	\begin{equation}\label{Nambu condition-2}
		\{\phi_{X_1},\cdots,\phi_{X_{n-1}},p^\ast f \}=p^\ast(\rho(X_1\wedge\cdotp\cdotp\cdotp \wedge X_{n-1})(f)),
	\end{equation}
	for all $f\in C^\infty(M)$ and $X_1, \cdots, X_{n-1} \in \Gamma(A)$.

	As a direct application of Theorems \ref{th2.3} and \ref{prop3.3}, one can construct linear Nambu-Poisson structures	 out of Filippov connections:
	\begin{Cor}\label{propo3.3}
		If $\nabla$ is a Filippov connection on an  $n$-anchored bundle $(A, \rho)$, where $A \to M$ is a vector bundle of rank $n\geqslant 3$, then the dual bundle $A^\ast$ admits a unique linear Nambu-Poisson structure of order $n$ defined by
		\begin{equation*}
			\{\phi_{X_1},\cdots, \phi_{X_n}\}=\phi_{[X_1,\cdots, X_n]^\nabla},
		\end{equation*}		
		for all $X_1,\cdots,X_n\in \Gamma(A)$.
	\end{Cor}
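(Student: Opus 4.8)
The plan is to deduce Corollary~\ref{propo3.3} directly by composing the two main theorems already established, so that no new computation is required.

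\emph{Step one.} First I would invoke Theorem~\ref{th2.3}. Since $\nabla$ is by hypothesis a Filippov connection on the $n$-anchored bundle $(A,\rho)$, Theorem~\ref{th2.3} asserts that the induced skew-symmetric $n$-bracket $[\cdotp,\cdots,\cdotp]^{\nabla}$ given by Eq~\eqref{eq9} makes $(A,\rho,[\cdotp,\cdots,\cdotp]^{\nabla})$ into a Filippov $n$-algebroid over $M$, i.e.\ all three axioms of Definition~\ref{def1.2} are satisfied.

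\emph{Step two.} Next I would feed this Filippov $n$-algebroid into Theorem~\ref{prop3.3}. The standing hypothesis transfers verbatim: $A\to M$ is a vector bundle of rank $n\geqslant 3$. Hence Theorem~\ref{prop3.3} yields a unique linear Nambu-Poisson structure of order $n$ on the dual bundle $A^{*}\to M$ satisfying
\[
\{\phi_{X_1},\cdots,\phi_{X_n}\}=\phi_{[X_1,\cdots,X_n]^{\nabla}}
\]
for all $X_1,\cdots,X_n\in\Gamma(A)$, which is precisely the content of the corollary; the uniqueness clause is inherited directly from that of Theorem~\ref{prop3.3}. If one wishes, one can also record, via Eq~\eqref{Nambu condition-2}, the companion relation $\{\phi_{X_1},\cdots,\phi_{X_{n-1}},p^{\ast}f\}=p^{\ast}(\rho(X_1\wedge\cdots\wedge X_{n-1})(f))$, and one can make the bracket fully explicit by substituting the torsion-free formula~\eqref{eq9} for $[\cdotp,\cdots,\cdotp]^{\nabla}$, exactly as was done for the pair construction $(D,\bar{\xi})$ in the preceding subsection.

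I do not anticipate any genuine obstacle: all the substance lies in Theorems~\ref{th2.3} and~\ref{prop3.3}, and the only point requiring attention is the matching of hypotheses (the rank-$n$ condition on $A$), which holds by assumption.
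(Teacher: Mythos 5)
Your proposal is correct and is exactly how the paper obtains this corollary: it is stated there as a direct application of Theorem~\ref{th2.3} (which turns the Filippov connection into a Filippov $n$-algebroid structure) followed by Theorem~\ref{prop3.3} (which produces the unique linear Nambu-Poisson structure on $A^*$ under the rank-$n$ hypothesis). Nothing further is needed.
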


	\subsection{Proof of Theorem \ref{prop3.3}}
	The proof is divided into three  steps.
	
	\textbf{Step 1.} Since functions of type $\phi_{X}$ (for $X\in \Gamma(A)$) and $p^*f$ (for $f\in C^\infty(M)$)  generate $C^\infty(A^*)$, there exists a unique $\mathbb{R}$-multilinear $n$-bracket $\{\cdotp,\cdots,\cdotp\}$ on $C^\infty(A^*)$  satisfying  Eqs \eqref{Nambu condition-1} and \eqref{Nambu condition-2}. We wish to write the corresponding $n$-vector field $\pi$ on $A^*$ explicitly.
	
	To this end, we work locally and consider the trivialization $A|_U\cong U\times \mathbb{R}^n$ over an open subset $U\subset M$ with coordinates $x^1, \cdots, x^m$; let $\{ X_1,\cdots,X_n\}$ be a local  basis  of $\Gamma(A|_U)$. Then $$\{ y_1=\phi_{X_1},\cdots,y_n=\phi_{X_n},p^*x^1,\cdots, p^*x^m\}$$
	forms a chart on $A^*|_U$. For convenience,  $p^*x^i$ is denoted by $x^i$.
	
	Suppose further that the Filippov algebroid $A|_U$ is described by the structure functions  $c^k $ and $f_{1\cdots \widehat{l}\cdots n}\in C^\infty(U)$ such that
	\[
	[X_1,\cdots,X_n]=\sum_{k=1}^n c^k X_k,
	\]
	\[\mbox{and } \quad \rho(X_{1}\wedge\cdots \widehat{X_{l}}\cdots\wedge X_{n})=
	f_{1\cdots \widehat{l}\cdots n}\frac{\partial}{\partial x^1}.
	\]
	Here we have utilized Proposition \ref{propo1.5}. Then one is able to find the expression of the $n$-vector field $\pi$ on $A^*|_U$:
	\begin{equation}\label{n-vector}
		\pi=\sum_{k=1}^nc^ky_k\frac{\partial}{\partial y_1}\wedge\cdots\wedge\frac{\partial}{\partial y_n}
		+\sum_{l=1}^nf_{1\cdots \widehat{l}\cdots n}\frac{\partial}{\partial y_{1}}
		\wedge\cdots\wedge\frac{\partial}{\partial \widehat{y_l}}\wedge\cdots\wedge\frac{\partial}{\partial y_{n}}\wedge \frac{\partial}{\partial x^1}
	\end{equation}
	which corresponds to the $n$-bracket $\{\cdots\}$ on $A^*|_U$.
	
	\textbf{Step 2.} We need to set up a preparatory lemma.
	\begin{lemma}\label{Lem:structureconditions}
		There exists a local basis $\{X_1,\cdots,X_{n}\}$ of $\Gamma(A|_U)$ such that	the corresponding structure functions $c^k$ and $f_{1\cdots \widehat{l}\cdots n}$  satisfy the following relations:  for all $i\neq j$ (in $\{1,\cdots, n\}$),
		\begin{eqnarray}
			f_{1\cdots \widehat{i}\cdots n}\frac{\partial f_{1\cdots \widehat{j}\cdots n}}{\partial x^1}&=&f_{1\cdots \widehat{j}\cdots n}\frac{\partial f_{1\cdots \widehat{i}\cdots n}}{\partial x^1};\label{volume-1}\\
			(-1)^if_{1\cdots \widehat{i}\cdots n}c^j&=&(-1)^jf_{1\cdots \widehat{j}\cdots n}c^i.\label{volume-2}
		\end{eqnarray}	
	\end{lemma}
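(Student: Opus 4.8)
The plan is to derive both identities from the generalized Jacobi identity~\eqref{eq3} and the Leibniz rule~\eqref{eq2}, exploiting heavily that $A$ has rank exactly $n$. Over $U$ the bracket $[X_{i_1},\cdots,X_{i_n}]$ of frame sections vanishes as soon as two of the indices coincide, and likewise $\rho(X_{i_1}\wedge\cdots\wedge X_{i_{n-1}})$ vanishes as soon as two indices coincide; combined with Proposition~\ref{propo1.5}, this is precisely why $[X_1,\cdots,X_n]=\sum_k c^k X_k$ and $\rho(X_1\wedge\cdots\widehat{X_l}\cdots\wedge X_n)=f_{1\cdots\widehat{l}\cdots n}\frac{\partial}{\partial x^1}$ already record all the structure over $U$. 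I would first dispose of the trivial case: if $\rho$ vanishes near the point under consideration, shrink $U$ so that $\rho|_U=0$; then every $f_{1\cdots\widehat{l}\cdots n}$ is zero and both~\eqref{volume-1} and~\eqref{volume-2} hold for any frame. Otherwise the image of $\rho$ is, after shrinking $U$, a rank-one distribution by Proposition~\ref{propo1.5}; since every element of $\wedge^{n-1}A_p$ is decomposable, one can choose the coordinates $x^1,\cdots,x^m$ so that $\frac{\partial}{\partial x^1}$ spans this distribution together with a local frame $\{X_1,\cdots,X_n\}$ for which $\rho(X_1\wedge\cdots\wedge X_{n-1})$ is nowhere vanishing on $U$.

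Next I would plug carefully chosen $(2n-1)$-tuples into~\eqref{eq3}. The productive choices take the $n-1$ outer entries to be a set of frame sections omitting one index, the first $n-1$ inner entries to be a set of frame sections omitting a different index, and the last inner entry to be $fX_s$ with $f\in C^\infty(U)$ a test function and $s$ an appropriate index; the Leibniz rule~\eqref{eq2} collapses the inner bracket to a scalar multiple of a single $X_s$, and the rank-$n$ vanishing kills almost every remaining term on both sides. Matching the coefficients of $\sum_k c^k X_k$ and of the surviving $X_s$, and using that the resulting equalities must hold for every $f$ (one may take $f=x^1$), yields algebraic relations forcing the two $\mathbb{R}^n$-valued functions $(c^1,\cdots,c^n)$ and $((-1)^i f_{1\cdots\widehat{i}\cdots n})_i$ to be pointwise linearly dependent on $U$; written through the vanishing of all $2\times 2$ minors, this is exactly~\eqref{volume-2}. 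For~\eqref{volume-1} I would feed frame sections alone into the anchor compatibility~\eqref{eq1} — legitimate since~\eqref{eq1} follows from~\eqref{eq2} and~\eqref{eq3} — which gives $f_{1\cdots\widehat{i}\cdots n}\frac{\partial f_{1\cdots\widehat{j}\cdots n}}{\partial x^1}-f_{1\cdots\widehat{j}\cdots n}\frac{\partial f_{1\cdots\widehat{i}\cdots n}}{\partial x^1}$ equal to a fixed signed combination of $c^{i}f_{1\cdots\widehat{j}\cdots n}$ and $c^{j}f_{1\cdots\widehat{i}\cdots n}$; the linear dependence just established makes that combination vanish, which is~\eqref{volume-1}. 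A short pointwise case check then handles the degenerate directions of~\eqref{volume-2}: where all the $f_{1\cdots\widehat{l}\cdots n}$ vanish it reads $0=0$, and where some $f_{1\cdots\widehat{l}\cdots n}$ is nonzero the algebraic relations force all but at most one $c^k$ to vanish and pin the line spanned by the $f$'s, which is the asserted dependence.

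I expect the main obstacle in the middle step. For $n\geqslant 4$ the identity~\eqref{eq3} still has many terms surviving the rank-$n$ collapse, and the signs produced by reordering wedge and bracket entries must be tracked exactly; one must also confirm that the frame normalized to $\rho$ in the first step already satisfies both identities, so that no further change of frame is needed, and that the algebraic consequences of~\eqref{eq3} are strong enough to force linear dependence for every $n\geqslant 3$ and not merely in the smallest cases. Verifying that these consequences close up into precisely~\eqref{volume-1} and~\eqref{volume-2} is the heart of the matter.
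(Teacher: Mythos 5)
There is a genuine gap at the heart of your argument: the linear-dependence claim in the middle step is false for frames satisfying only your normalization. The relation \eqref{volume-2} is \emph{not} invariant under rescaling a single frame element (the structure functions $c^k$ transform non-tensorially, picking up a derivative term $f\cdot\partial h/\partial x^1$), so it is an honest existence statement about a \emph{very} particular frame, not a consequence of the Filippov axioms in any frame adapted to $\rho$. Concretely, on $A=T\mathbb{R}^n$ take the Filippov structure produced by the pair $(D,\bar{\xi})$ of Section~\ref{Sec:Dxipair} with $D$ the flat covariant derivative along $\partial/\partial x^1$ and $\bar{\xi}=dx^1\wedge\cdots\wedge dx^{n-1}+dx^2\wedge\cdots\wedge dx^n$; then $D(\bar{\xi})=0$, so \eqref{EigenCondition} holds and this is a bona fide Filippov $n$-algebroid. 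In the frame $X_i=\partial/\partial x^i$ for $i<n$, $X_n=e^{x^1}\,\partial/\partial x^n$, one has $\rho(X_1\wedge\cdots\wedge X_{n-1})=\partial/\partial x^1$ nowhere vanishing and the image of $\rho$ spanned by $\partial/\partial x^1$ (so your normalization is met), yet $[X_1,\cdots,X_n]=X_n$ gives $c^n=1$, $c^k=0$ otherwise, while $f_{1\cdots\widehat{1}\cdots n}=e^{x^1}$ and $f_{1\cdots\widehat{n}\cdots n}=1$. Taking $i=1$, $j=n$ in \eqref{volume-2} yields $-e^{x^1}\neq 0$. Since the generalized Jacobi identity \eqref{eq3} holds here, no amount of plugging tuples into it can force the vanishing of the $2\times 2$ minors you want; the obstacle you flagged at the end (``confirm that the frame normalized to $\rho$ already satisfies both identities'') is precisely where the proof breaks, and no further massaging of signs will repair it.

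What is actually needed is a sharper frame choice that you do not make. Because $\operatorname{rank}(\rho(\wedge^{n-1}A))\leqslant 1$ and $\wedge^{n-1}A$ has rank $n$, the kernel of $\rho_p$ has dimension at least $n-1$; the paper picks a local frame $Z_2,\cdots,Z_n$ of this kernel, transports it through the isomorphism $\Omega^\sharp\colon A^*|_U\to\wedge^{n-1}A|_U$ induced by a volume section $\Omega$ of $\wedge^nA$, and dualizes to get $\{X_1,\cdots,X_n\}$ with $f_{1\cdots\widehat{i}\cdots n}\equiv 0$ for all $i\geqslant 2$. With all but one $f$ identically zero, \eqref{volume-1} is trivial and \eqref{volume-2} collapses to $c^j f_{1\cdots\widehat{1}\cdots n}=0$ for $j\geqslant 2$, which follows from a single application of \eqref{eq1} — the same computation you propose for \eqref{volume-1}, but it only closes up because of the prior vanishing of the $f$'s. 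Your treatment of the trivial case $\rho|_U=0$ and your intended use of \eqref{eq1} are fine; it is the normalization step that must be strengthened before anything downstream can work.
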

	\begin{proof}\renewcommand{\qedsymbol}{}
		Consider the map $\rho\colon \wedge^{n-1}A|_U \to TU$. By Proposition~\ref{propo1.5}, for any point $p\in U$, we have $\operatorname{rank}(\rho(\wedge^{n-1}A)_p)\leqslant 1$,  and thus $\dim(\ker(\rho_p)) \geqslant (n-1)$. Note that the subset $V \subset U$ where $\dim(\ker(\rho_p))$ is locally constant is open and dense. By a continuity argument if necessary,  we may assume that $\dim(\ker(\rho_p))$ is locally constant on $U$. Thus,  we are able to find  a local basis  $\{Z_1,\cdots,Z_n\}$   of $\Gamma(\wedge^{n-1}A|_U)$ such that $\rho(Z_2)$, $\cdots$, $\rho(Z_n)$ are trivial.
		
		Take an arbitrary $\Omega \in \Gamma(\wedge^{n}A |_U)$ which is nowhere vanishing   on $U$. Consider
		\[
		\Omega ^\sharp \colon A^\ast |_U\to \wedge^{n-1}A |_U,\ \ \ \ \Omega ^\sharp(\alpha):=i_\alpha\Omega ,
		\]
		which is an isomorphism  of vector bundles.    Then we obtain a basis $\{ \alpha_1$, $\cdots$, $\alpha_n \}$ of   $\Gamma(A^\ast |_U)$ by setting $\alpha_i:=(\Omega^\sharp)^{-1}(Z_i)$. Let $\{ X_1,\cdots,X_n\}$ be the  dual basis of $\Gamma(A |_U)$ corresponding to $\{\alpha_1,\cdots,\alpha_n\}$.  There exists a nowhere vanishing smooth function $g\in C^\infty(U)$ such that $\Omega =gX_1\wedge\cdots\wedge X_n$, and hence $Z_i=i_{\alpha_i}\Omega =gX_1\wedge\cdots\widehat{X_i}\wedge\cdots\wedge X_n$.	
		
		Since we have $\rho(Z_i)=0$ ($\forall i\in \{2,\cdots, n\}$), we also have
		\begin{equation}
			\label{Eqt:rhoXizero} \rho(X_1\wedge\cdots\widehat{X_i}\wedge\cdots\wedge X_n)=0,\quad \forall i\in \{2,\cdots, n\}.
		\end{equation}		
		Using the axiom of a Filippov algebroid, we have a relation
		\begin{align*}
			&\quad f_{1\cdots\widehat{i}\cdots n}\frac{\partial f_{1\cdots\widehat{j}\cdots n}}{\partial x^1}\frac{\partial }{\partial x^1}-f_{1\cdots\widehat{j}\cdots n}\frac{\partial f_{1\cdots\widehat{i}\cdots n}}{\partial x^1}\frac{\partial }{\partial x^1} \\
			&=[f_{1\cdots\widehat{i}\cdots n}\frac{\partial}{\partial x^1},f_{1\cdots\widehat{j}\cdots n}\frac{\partial}{\partial x^1}]_{TM} \\
			&=[\rho(X_1\wedge\cdots\wedge\widehat{X_i}\wedge\cdots\wedge X_n),\rho(X_1\wedge\cdots\wedge\widehat{X_j}\wedge\cdots\wedge X_n)]_{TM} \quad \text{by Eq. \eqref{eq1}} \\
			&= \sum_{k=1,k< j}^{n} \rho(X_1\wedge\cdots\wedge[X_1,\cdots,\widehat{X_i},\cdots,X_n,X_k] \wedge\cdots\wedge \widehat{X_j}\wedge\cdots X_n) \\
			&\quad  +\sum_{k=1,k> j}^{n}\rho(X_1\wedge\cdots\wedge \widehat{X_j}\wedge\cdots \wedge [X_1,\cdots, \widehat{X_i},\cdots,X_n,X_k] \wedge \cdots X_n) \\
			&= (-1)^{(n-i)}c^if_{1\cdots\widehat{j}\cdots n}\frac{\partial }{\partial x^1}-(-1)^{(n-j)}c^jf_{1\cdots\widehat{i}\cdots n}\frac{\partial }{\partial x^1}.
		\end{align*}
		According to the previous fact \eqref{Eqt:rhoXizero}, all the lines above must be trivial, and thus the desired two equalities \eqref{volume-1} and \eqref{volume-2} are proved.
	\end{proof}
	
	\textbf{Step 3.}
	We wish to show that the $n$-bracket $\{\cdotp,\cdots,\cdotp\}$ given in Step 1, or the $n$-vector field $\pi$ locally given in Eq \eqref{n-vector}, is a linear Nambu-Poisson structure on $A^*$.
	
	We need the following proposition due to Dufour and Zung~\cite{DZ}.
	\begin{proposition}\cite{DZ}\label{Omega propo}
		Let $\Omega$ be a volume form on an $l$-dimensional manifold $P$, and $\pi$ an   $n$-vector filed on $P$, where $l>n\geqslant 3$.  Consider  the $ (l-n)$-form  $\omega:=\iota_\pi\Omega$ on $P$. Then $\pi$ defines a Nambu-Poisson structure (via Eq \eqref{Eqt:piandnbracket}) if and only if $\omega$ satisfies the following two conditions:
		\begin{eqnarray}
			(\iota_K\omega)\wedge\omega=0, \label{NP-1}
			\\
			(\iota_K\omega)\wedge d\omega=0, \label{NP-2}
		\end{eqnarray}
		for any $(l-n-1)$-vector field $K$ on $P$.
	\end{proposition}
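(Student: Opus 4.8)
The plan is to deduce Proposition~\ref{Omega propo} from the classical structure theory of Nambu--Poisson tensors together with a fibrewise exterior-algebra translation. Recall the well-known fact (valid precisely because $n\geqslant 3$; see \cite{DZ} and the references therein) that an $n$-vector field $\pi$ on $P$ satisfies the fundamental identity if and only if, on the open set $\{\pi\neq 0\}$, $\pi$ is locally decomposable and the rank-$n$ distribution $\mathrm{Im}(\pi)$ it spans is integrable --- equivalently, near every point of $\{\pi\neq 0\}$ there are coordinates with $\pi=\frac{\partial}{\partial x_1}\wedge\cdots\wedge\frac{\partial}{\partial x_n}$ --- while on the interior of the zero locus of $\pi$ every condition in sight holds trivially. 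Since the fundamental identity and the two conditions \eqref{NP-1}, \eqref{NP-2} are each expressed by the vanishing of quantities depending polynomially on $\pi$ and its first derivatives, it is enough to establish the equivalence on $\{\pi\neq 0\}$ and then let it propagate across the boundary by continuity.

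First I would use the volume form to set up the bundle isomorphism $\iota_{(\cdot)}\Omega\colon\wedge^\bullet TP\to\wedge^{\,l-\bullet}T^*P$, under which $\pi$ corresponds to $\omega=\iota_\pi\Omega$. A direct computation (contracting a volume form by a decomposable multivector) shows that $\pi$ is decomposable at a point precisely when $\omega$ is decomposable there, and that in that case the annihilator distribution $\{v:\iota_v\omega=0\}$ coincides with $\mathrm{Im}(\pi)$. On the other hand, the Pl\"ucker-type criterion states that an $(l-n)$-form $\omega$ is decomposable if and only if $(\iota_K\omega)\wedge\omega=0$ for every $(l-n-1)$-vector $K$; since this is linear in $K$, it suffices to test decomposable $K$, whereupon it becomes a finite computation inside each exterior algebra $\wedge^\bullet T_x^*P$. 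Together these identify condition \eqref{NP-1} with the decomposability of $\pi$.

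Next, assuming \eqref{NP-1}, I would work locally on $\{\omega\neq 0\}$ and write $\omega=\theta_1\wedge\cdots\wedge\theta_{l-n}$ with the $\theta_i$ pointwise linearly independent. The key observation is that, as $K$ runs over all $(l-n-1)$-vector fields, the $1$-forms $\iota_K\omega$ span exactly $\langle\theta_1,\dots,\theta_{l-n}\rangle$: indeed $\iota_K\omega$ always lies in this span, and taking $K$ to be a product of $l-n-1$ of the vector fields dual to a local coframe completing $(\theta_1,\dots,\theta_{l-n})$ recovers each $\theta_i$ up to sign. Consequently \eqref{NP-2} is equivalent to $\theta_i\wedge d\omega=0$ for all $i$, and, expanding $d\omega$ in that coframe, this says $d\omega=\gamma\wedge\omega$ for some $1$-form $\gamma$ --- precisely the Frobenius integrability of $\ker\omega=\mathrm{Im}(\pi)$. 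Combining this with the previous paragraph and the structure theorem gives the asserted equivalence.

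The step I expect to be the main obstacle is the behaviour on the zero locus of $\pi$: decomposability of $\pi$ and the normal form are available only where $\pi\neq 0$, so one must argue that the equivalence survives across $\partial\{\pi\neq 0\}$, which is exactly where the ``closed condition plus continuity'' remark of the first paragraph is used. The only genuinely non-elementary ingredient --- the structure theorem equating ``decomposable with integrable image'' with the fundamental identity when $n\geqslant 3$ --- I would cite from \cite{DZ} rather than reprove, the remainder being linear algebra fibrewise together with the Frobenius theorem.
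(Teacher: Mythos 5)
The paper offers no proof of this proposition at all --- it is imported verbatim from Dufour--Zung \cite{DZ} and used as a black box in Step 3 of the proof of Theorem \ref{prop3.3} --- so there is no in-paper argument to compare yours against. Judged on its own, your proof is correct and is essentially the standard derivation: reduce to the local structure theorem (``fundamental identity $\Leftrightarrow$ locally decomposable with integrable image distribution on $\{\pi\neq 0\}$, for $n\geqslant 3$''), identify condition \eqref{NP-1} with decomposability of $\omega$ via the Pl\"ucker relations and the isomorphism $\iota_{(\cdot)}\Omega$, and identify \eqref{NP-2}, given \eqref{NP-1}, with the Frobenius condition for $\ker\omega=\operatorname{Im}(\pi)$ by observing that the forms $\iota_K\omega$ sweep out exactly the span $\langle\theta_1,\dots,\theta_{l-n}\rangle$ and that $\theta_i\wedge d\omega=\pm\, d\theta_i\wedge\omega$. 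Your handling of the zero locus is also the right one: for fixed arguments both the fundamental-identity defect and the forms $(\iota_K\omega)\wedge\omega$, $(\iota_K\omega)\wedge d\omega$ are continuous tensors/functions, they all vanish on the dense open set $\{\pi\neq 0\}\cup\operatorname{int}\{\pi=0\}$ where the equivalence is established, hence everywhere. The only caveat is that the entire analytic content sits in the structure theorem you cite; since the direction ``fundamental identity $\Rightarrow$ pointwise decomposability'' is precisely the nontrivial part (and fails for $n=2$), your argument is a legitimate reformulation rather than an independent proof --- which is appropriate here, as the proposition itself is attributed to \cite{DZ}.
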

	
	Consider the volume form $\Omega= dy_1\wedge\cdots dy_n \wedge dx^1\wedge\cdots\wedge dx^m$   on $A^\ast|_U$, where $U$, $y_i$, and $x^j$ are as earlier, and we suppose that such a coordinate system stems from $\{X_1,\cdots, X_n\}$ fulfills Lemma \ref{Lem:structureconditions}. According to Proposition \ref{Omega propo}, we need to examine the $m$-form defined by:
	\begin{equation*}\label{eq:omega}
		\omega:=\iota_{\pi}\Omega=\sum_{k=1}^nc^ky_kdx^1\wedge\cdots\wedge dx^m+\sum_{j=1}^n(-1)^{n-j+1}f_{1\cdots \widehat{j}\cdots n}dy_j\wedge dx^2\wedge\cdots\wedge dx^m.	
	\end{equation*}	
	We can easily  check that    $\omega$ satisfies Eq \eqref{NP-1} and hence it remains to check Eq \eqref{NP-2}. One first finds that
	\begin{eqnarray*}\label{eq:domega}
		d\omega&=&\sum_{k=1}^ny_kdc^k\wedge dx^1\wedge\cdots\wedge dx^m+\sum_{k=1}^nc^kdy_k\wedge dx^1\wedge\cdots\wedge dx^m\notag\\
		&&\ +\sum_{j=1}^n(-1)^{n-j+1}df_{1\cdots \widehat{j}\cdots n}\wedge dy_j\wedge dx^2\wedge\cdots\wedge dx^m\notag\\
		&=&\sum_{k=1}^nc^kdy_k\wedge dx^1\wedge\cdots\wedge dx^m+\sum_{j=1}^n(-1)^{n-j+1}\frac{\partial f_{1\cdots\widehat{j}\cdots n}}{\partial x^1}dx^1\wedge dy_j\wedge dx^2\wedge\cdots\wedge dx^m.
	\end{eqnarray*}	
	
	Consider the following special type of $(m-1)$-vector field on $A^\ast|_U$:  $K =\frac{\partial}{\partial x^2}\wedge\cdots\wedge\frac{\partial}{\partial x^m}$.    Then one computes:
	\begin{align*}
		(\iota_K \omega) \wedge d \omega
		&= (-1)^{m-1} \left(\sum_{k=1}^nc^ky_kdx^1+ \sum_{j=1}^n(-1)^{n-j+1}f_{1\cdots \widehat{j}\cdots n} dy_j \right) \wedge d\omega \\
		&= \sum_{j=1}^n (-1)^{m+n-j}f_{1\cdots \widehat{j}\cdots n}dy_j \wedge \sum_{i=1}^nc^idy_i\wedge dx^1\wedge\cdots\wedge dx^m\\
		&\quad +\sum_{j=1}^n(-1)^{m+n-j}f_{1\cdots \widehat{j}\cdots n}dy_j\wedge \sum_{i=1}^n(-1)^{n-i+1}\frac{\partial f_{1\cdots\widehat{i}\cdots n}}{\partial x^1}dx^1\wedge dy_i\wedge dx^2\wedge\cdots\wedge dx^m \\
		&= (-1)^{m}\sum_{j=1}^n\sum_{i=1,i\neq j}^{n}((-1)^{(n-i)}c^if_{1\cdots\widehat{j}\cdots n}-(-1)^{(n-j)}c^jf_{1\cdots\widehat{i}\cdots n})dy_j\wedge dy_i\\
		&\quad +\sum_{j=1}^n\sum_{i=1,i\neq j}^{n}(-1)^{m+i+j}(f_{1\cdots \widehat{j}\cdots n}\frac{\partial f_{1\cdots\widehat{i}\cdots n}}{\partial x^1}-f_{1\cdots \widehat{i}\cdots n}\frac{\partial f_{1\cdots\widehat{j}\cdots n}}{\partial x^1})dy_j\wedge dy_i \;\;\text{by Eqs.~\eqref{volume-1} and \eqref{volume-2}} \\
		&= 0.
	\end{align*}
	This justifies Eq~\eqref{NP-2} for this particular $K$. For other types of $K$, it is easy to verify Eq \eqref{NP-2} as well. This completes the proof of Theorem \ref{prop3.3}.
	
	\begin{bibdiv}
		\begin{biblist}
			\bib{BBDM}{article}{
				author={Basu, S.},
				author={Basu, S.},
				author={Das, A.},
				author={Mukherjee, G.},
				title={Nambu structures and associated bialgebroids},
				journal={Proc. Math. Sci.},
				date={2019},
				volume={129},
				number={12},
				pages={1--36},
			}
			
				\bib{BL}{article}{
				author={Bi, Y.},
				author={Li, J.},
				title={Higher Dirac structures and Nambu-Poisson Geometry},
				journal={Adv. Math. (China)},
				date={2022},
				volume={52},
				number={5},
				pages={867--882},
			}

			\bib{BDPR}{article}{
				author={Blumenhagen, R.},
				author={Deser, A.},
				author={Plauschinn, E.},
				author={Rennecke, F.},
				title={Bianchi identities for non-geometric fluxes from quasi-Poisson structures to Courant algebroids},
				journal={Fortschr. Phys.},
				date={2012},
				volume={60},
				number={11-12},
				pages={1217--1228},
			}

			\bib{DZ}{article}{
				author={Dufour, J. P.},
				author={Zung, N. T.},
				title={Linearization of Nambu structures},
				journal={Compos. Math.},
				date={1999},
				volume={117},
				number={1},
				pages={83--105},
			}

			\bib{ref10}{article}{
				author={Filippov, V. T.},
				title={$n$-Lie algebras},
				journal={Siberian Math. J.},
				date={1985},
				volume={26},
				pages={879--891},
			}
		\bib{GGR}{article}{
		    author={Grabowska K.},
			author={Grabowska J.}, 	
			author={Ravanpak Z.}, 
			title={VB-structures and generalizations}, 
			journal={Ann. Global Anal. Geom.},
			date={2022}
			volume={62}
			pages={235--284}
		}
			\bib{ref12}{article}{
				author={Grabowski, J.},
				author={Marmo, G.},
				title={On Filippov algebroids and multiplicative Nambu-Poisson structures},
				journal={Differ. Geom. Appl.},
				date={2000},
				volume={12},
				number={1},
				pages={35--50},
			}
			
			\bib{Differential}{article}{
				author={Kosmann-Schwarzbach, Y.},
				author={Mackenzie, K.},
				title={Differential operators and actions of Lie algebroids},
				journal={Amer. Math. Soc.},
				date={2002},
				pages={213--233},
			}
			
			\bib{LSX}{article}{
				author={Laurent-Gengoux, Camille},
				author={Sti\'{e}non, Mathieu},
				author={Xu, Ping},
				title={Poincar\'{e}-Birkhoff-Witt isomorphisms and Kapranov dg-manifolds},
				journal={Adv. Math.},
				volume={387},
				date={2021},
				pages={Paper No. 107792, 62},
				issn={0001-8708},
			}
			
			\bib{Lie}{article}{
				author={Mackenzie, K.},
				title={Lie Groupoids and Lie Algebroids in Differential Geometry},
				journal={LMS Lecture Notes
					Series 124, Cambridge University Press,},
				date={1987},
			}
			
			\bib{pseudoalgebras}{article}{
				author={Mackenzie, K.},
				title={Lie algebroids and Lie pseudoalgebras},
				journal={Bull. London Math. Soc.},
				date={1995},
				volume={27},
				pages={97--147},
			}
			
			\bib{ref13}{article}{
				author={Marmo, G.},
				author={Vilasi, G.},
				author={Vinogradov, A. M.},
				title={The local structure of n-Poisson and n-Jacobi
					manifolds},
				journal={J. Geom. Phys.},
				date={1998},
				volume={25},
				number={1-2},
				pages={141--182},
			}
			
			\bib{ref14}{article}{
				author={Mishra, S. K.},
				author={Mukherjee, G.},
				author={Naolekar, A.},
				title={Cohomology and deformations of Filippov algebroids},
				journal={Proc. Math. Sci.},
				date={2022},
				volume={132},
				number={2},
			}
			
			\bib{ref15}{article}{
				author={Nakanishi, N.},
				title={On Nambu-Poisson manifolds},
				journal={Rev. Math. Phys.},
				date={1998},
				volume={10},
				number={4},
				pages={499--510},
			}
			
			\bib{ref16}{article}{
				author={Nambu, Y.},
				title={Generalized Hamiltonian mechanics},
				journal={Phys. Rev.},
				date={1973},
				volume={D7},
				pages={2405--2412},
			}
			
			\bib{PP}{article}{
				author={Popescu, P.},
				author={Popescu, M.},
				title={Anchored vector bundles and {Lie} algebroids},
				journal={Bana. Cent. Publ.},
				date={2001},
				volume={54},
				pages={51--69},
			}
			
			\bib{Tak}{article}{
				author={Takhtajan, L.},
				title={On foundation of the generalized Nambu mechanics},
				journal={Commun. Math. Phys.},
				date={1994},
				volume={160},
				number={2},
				pages={295--315},
			}
		\end{biblist}
	\end{bibdiv}
\end{document}